\pgfplotsset{compat=1.18}
\definecolor{myblue}{rgb}{0.25,0.45,0.99}
\definecolor{myorange}{rgb}{0.8500, 0.3250, 0.0980}
\definecolor{myyellow}{rgb}{0.9290, 0.6940, 0.1250}
\definecolor{mypurple}{rgb}{0.4940, 0.1840, 0.5560}
\definecolor{mygreen}{rgb}{0.4660, 0.6740, 0.1880}
\let\OLDthebibliography\thebibliography
\renewcommand\thebibliography[1]{
  \OLDthebibliography{#1}
  \setlength{\parskip}{0.4pt}
  \setlength{\itemsep}{3.0pt plus 0.3ex}
}
\newtheorem{theorem}{Theorem}[section]
\newtheorem{theorem*}[theorem]{Theorem*}
\newtheorem{corollary}[theorem]{Corollary}
\newtheorem{proposition}[theorem]{Proposition}
\newtheorem{thm/conj}[theorem]{Theorem/Conjecture}
\theoremstyle{definition}
\newenvironment{example}
{\pushQED{\qed}\examplex}
{\popQED\endexamplex}
\newenvironment{exercise}
{\pushQED{\qed}\exercisex}
{\popQED\endexercisex}
\newtheorem{definition}[theorem]{Definition}
\newtheorem{remark}[theorem]{Remark}
\theoremstyle{remark}
\newcommand\restr[2]{{
  \left.\kern-\nulldelimiterspace 
  #1
  \vphantom{\big|} 
  \right|_{#2}
  }}
\title{\textbf{Positive Geometry of Polytopes and Polypols}
}
\author{Simon Telen}
\date{}
\begin{document}

\maketitle

\begin{abstract}
    \noindent These are lecture notes supporting a minicourse taught at the Summer School in Total Positivity and Quantum Field Theory at CMSA Harvard in June 2025. We give an introduction to positive geometries and their canonical forms. We present the original definition by Arkani-Hamed, Bai and Lam, and a more recent definition suggested by work of Brown and Dupont. We compute canonical forms of convex polytopes and of quasi-regular polypols, which are nonlinear generalizations of polygons in the plane. The text is a collection of known results. It contains many examples and a list of exercises. 
\end{abstract}

\section{Definitions and first examples} \label{sec:1}

Positive Geometry is an emerging field of mathematics with origins in theoretical physics \cite{ranestad2025positive}. It studies the geometry and combinatorics underlying physical observables such as scattering amplitudes and cosmological correlators. Here ``geometry'' is meant in a broad sense, including complex, real and tropical algebraic geometry. The discovery of the amplituhedron by Arkani-Hamed and Trnka marks one of the first milestones of the subject \cite{arkani2014amplituhedron}. This was followed by numerous other geometric structures in particle physics and cosmology, of which we mention correlahedra \cite{eden2017correlahedron}, ABHY associahedra \cite{arkani2018scattering}, surfacehedra \cite{arkani2023all,arkani2023all2}, cosmological polytopes \cite{arkani2017cosmological} and cosmohedra \cite{arkani2024cosmohedra}. Physical principles underlying all these objects cause them to have several common features. This motivates the development of a unifying mathematical framework. The first step was taken in \cite{arkani2017positive}, which gives an axiomatic definition of a \emph{positive geometry}. This definition is recursive and somewhat technical. However, it is easy to illustrate with examples. The recent Hodge-theoretic approach from \cite{brown2025positive} suggests an alternative definition, which is easier to state. A reader with knowledge of mixed Hodge theory may find the latter definition more natural. The two definitions are not quite equivalent, but most examples in these lectures align with both. We start with the classical definition following \cite{arkani2017positive, lam2024invitation}.  

Let $X$ be an irreducible complex projective variety of dimension $d$. Let $X_{\geq 0}$ be a $d$-dimensional closed semi-algebraic subset of the real points $X(\mathbb{R}) \subseteq X$. This assumes that $X$ has ``sufficiently many'' real points. We require $X_{\geq 0}$ to equal the closure of its interior: $X_{\geq 0} = \overline{{\rm int}(X_{\geq 0})}$. Here interior and closure are with respect to the analytic topology on $X(\mathbb{R})$. Moreover, we assume that ${\rm int}(X_{\geq 0})$ is an oriented real manifold. The Zariski closure of the euclidean boundary $\partial X_{\geq 0} = X_{\geq 0} \setminus {\rm int}(X_{\geq 0})$ is a divisor on $X$, denoted by $Y$. Its irreducible decomposition is $Y = Y_1 \cup \cdots \cup Y_r$. The divisor $Y$ is called the \emph{algebraic boundary} of $X_{\geq 0}$.

\begin{example}
Figure \ref{fig:firstexamples} shows two examples. On the left, $X$ is a conic in $\mathbb{P}^2$ given by $x_0^2-x_1^2-x_2^2 = 0$, and $X_{\geq 0}$ is the green curve segment $X \cap \mathbb{P}^2_{\geq 0}$. The black circle containing $X_{\geq 0}$ consists of the real points $X(\mathbb{R})$ of $X$. The algebraic boundary $Y$ consists of two points $(r = 2)$. On the right, $X = \mathbb{P}^2$ and $X_{\geq 0}$ is a semi-algebraic set bounded by two lines and a conic. The euclidean boundary $\partial X_{\geq 0}$ is the bold blue curve. We choose the orientations as indicated in pink. A non-example is given by the semialgebraic set $X_{\geq 0} = \{ x \in \mathbb{R} \, : \, x^2(x-1)(x-2) \leq 0 \}$, viewed as a subset of $X = \mathbb{P}^1$. This violates the requirement $X_{\geq 0} = \overline{{\rm int}(X_{\geq 0})}$. 
    \begin{figure}
        \centering
        \includegraphics[width=0.65\linewidth]{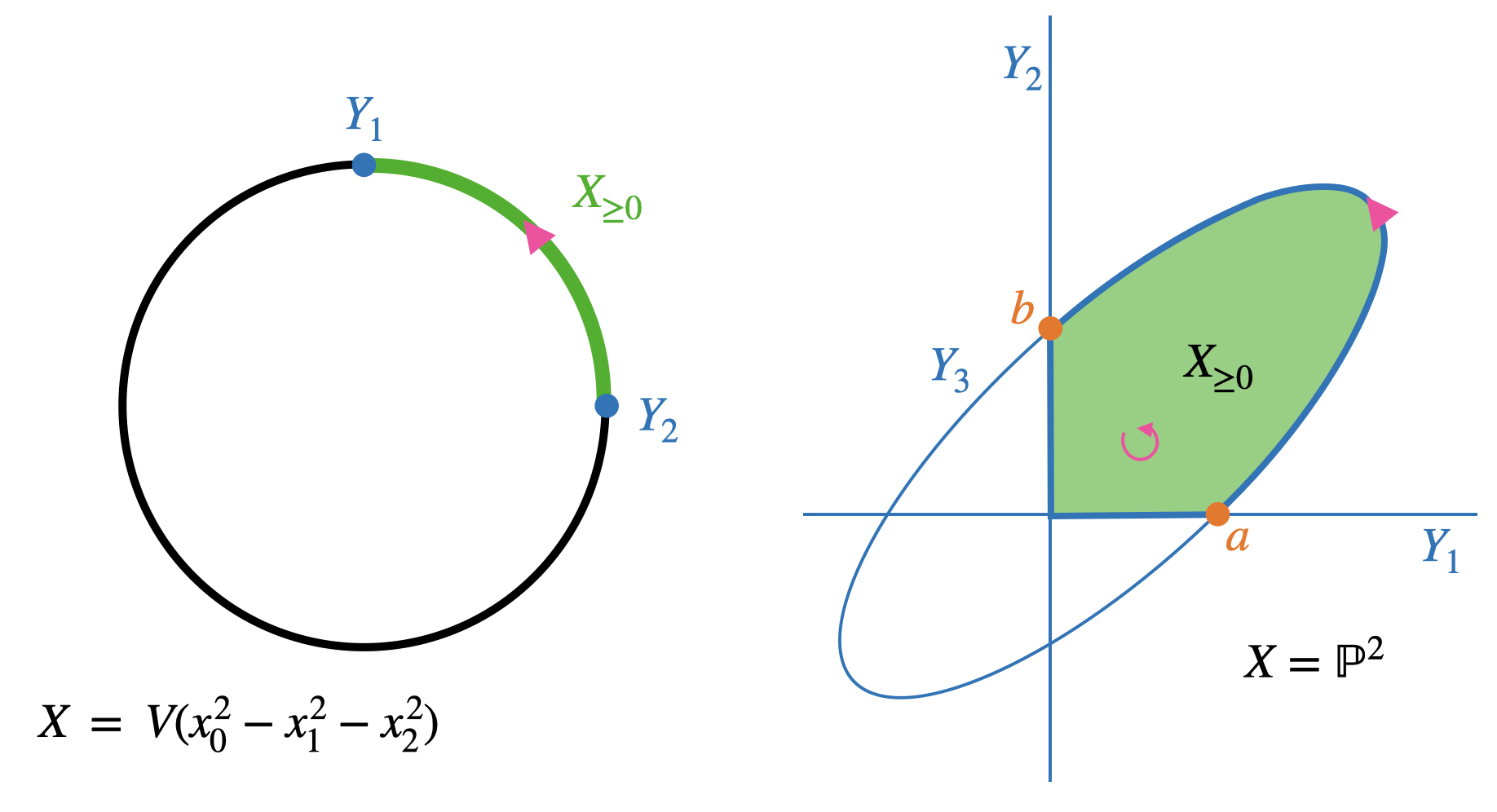}
        \caption{Two positive geometries of dimension $d = 1$ (left) and $d = 2$ (right).}
        \label{fig:firstexamples}
    \end{figure}
\end{example}

The tuple $(X,X_{\geq 0})$ is the data required for a candidate positive geometry in the sense of our first definition (Definition \ref{def:posgeomABL}). That definition requires the ``iterated boundaries'' of $X_{\geq 0}$ to be positive geometries of smaller dimension. To make this precise, for $i = 1, \ldots, r$, let $(Y_i)_{\geq 0} = \overline{{\rm int}(X_{\geq 0} \cap Y_i)}$, where interior and closure are with respect to the analytic topology on $Y_i(\mathbb{R})$, the real points of $Y_i$. In Figure \ref{fig:firstexamples} (right), $(Y_3)_{\geq 0}$ is the curve segment connecting the points $a$ and $b$ along $Y_3$. The orientation of ${\rm int}((Y_3)_{\geq 0})$ is induced by that of ${\rm int}(X_{\geq 0})$. 

Positive geometries are equipped with a differential $d$-form. By a \emph{differential $d$-form on $X$} we mean a differential $d$-form on the smooth manifold $X \setminus {\rm Sing}(X)$, the smooth points of $X$. 
\begin{example}
    A rational $d$-form on $X = \mathbb{P}^d$ is given locally on $\mathbb{C}^d \subset \mathbb{P}^d$ by 
    \begin{equation} \label{eq:g/f} \omega \, = \, \frac{g(x_1,\ldots, x_d)}{f(x_1, \ldots, x_d)} \, {\rm d} x_1 \wedge \cdots \wedge {\rm d} x_d,\end{equation}
    where $f, g \in \mathbb{C}[x_1, \ldots, x_d]$ are coprime and $f \neq 0$. The closure in $\mathbb{P}^d$ of an irreducible component of the affine hypersurface $g = 0$ is called a \emph{zero} of $\omega$. Similarly, the closure of a component of $f=0$ is a \emph{pole} of $\omega$. The \emph{order} of a pole locally given by $f_0 = 0$ is the largest $k$ such that $f$ can be written as $f = f_0^k f_1$ for some polynomial $f_1$. The analogous definition gives the order of a zero. Each rational $d$-form on $\mathbb{P}^d$ has finitely many poles and zeros.

    More generally, if $X$ is smooth and covered by affine spaces (e.g., $X$ is a smooth projective toric variety or a Grassmannian), then $\omega$ is also represented by \eqref{eq:g/f} on an affine chart $\mathbb{C}^d \subseteq X$.  

    If $X$ is any smooth $d$-dimensional algebraic variety, a rational $d$-form $\omega$ is such that on each chart $\varphi: U \rightarrow V \subset \mathbb{C}^d$ of $X$, $\omega$ is given by $h(x) \, {\rm d} x$ where $h: V \rightarrow \mathbb{C}$ is meromorphic and $\varphi^*h: U \rightarrow \mathbb{C}$ is the restriction of a rational function on $X$. The \emph{order of vanishing} of $\omega$ along a prime divisor $Y_i \subset X$ is defined as in \cite[Chapter II, \S 6]{hartshorne2013algebraic}. Such a prime divisor is a pole of order $k$ if the order of vanishing is $-k$, and a zero of order $k$ if the order of vanishing is $k$. In these lectures we mostly work with the concrete definition for $X = \mathbb{P}^d$.
\end{example}
The final ingredient we need to state the first definition is the (Poincar\'e) \emph{residue} of a differential $d$-form. Let $\omega$ be a differential $d$-form on $X$ with a pole of order at most one along a prime divisor $Y_i \subset X$, not contained in the singular locus ${\rm Sing}(X)$. A pole of order one is called a \emph{simple pole}. Locally at a smooth point of $Y_i \setminus {\rm Sing}(X)$ we~have
\begin{equation} \label{eq:decompomega} \omega \, = \,  \eta \wedge \frac{{\rm d} f}{f} + \eta',\end{equation}
where $\eta$ is a $(d-1)$-form and $\eta'$ a $d$-form, both with no poles along $Y_i$, and $f$ is a local coordinate which vanishes to order one along $Y_i$. The residue of $\omega$ along $Y_i$ is 
\[ {\rm Res}_{Y_i} \, \omega \, = \, \eta_{|Y_i}.\]
To compute the decomposition \eqref{eq:decompomega}, one must express $\omega$ in local coordinates. For instance, if $\mathbb{C}^d \subseteq X$ is an affine open subset, one can use an expression like \eqref{eq:g/f} and observe that 
\[ \omega \, = \,  \frac{g}{f_1 \cdots f_r} \, {\rm d} x_1 \wedge \cdots \wedge {\rm d} x_{d-1} \wedge {\rm d} x_d \, = \, \frac{g}{f_1 \cdots \hat{f}_i \cdots f_{r} \frac{\partial f_i}{\partial x_d}} \, {\rm d} x_1 \wedge \cdots \wedge {\rm d}x_{d-1} \wedge  \frac{{\rm d} f_i}{f_i}. \]

\begin{example} \label{ex:residues}
    Here are some simple examples of residue computations: 
    \[ {\rm Res}_{x=0} \Big (\frac{1}{xy} \, {\rm d}x \wedge {\rm d}y \Big )\, = \,  {\rm Res}_{x=0} \Big ( \frac{1}{x(y-x)} \, {\rm d}x \wedge {\rm d}y \Big ) \, = \, -\frac{{\rm d}y}{y}, \quad {\rm Res}_{y=0} \Big (\frac{1}{xy} \, {\rm d}x \wedge {\rm d}y \Big) \, = \, \frac{{\rm d}x}{x}. \qedhere\]
\end{example}

\begin{definition} \label{def:posgeomABL}
    With the above notation and assumptions, the tuple $(X,X_{\geq 0})$ is called a \emph{positive geometry} if there exists a unique nonzero rational differential $d$-form $\omega(X_{\geq 0})$ on $X$, called \emph{canonical form},  satisfying the following recursive axioms: 
    \begin{enumerate}
        \item If $d = 0$, then $X = X_{\geq 0} = {\rm pt}$ and $\omega(X_{\geq 0}) = \pm 1$. The sign is the orientation of $X$. 
        \item If $d > 0$, then $\omega(X_{\geq 0})$ has simple poles along $Y_1, \ldots, Y_r$, with $Y_i \not \subseteq {\rm Sing}(X)$, and it has no other poles on $X \setminus {\rm Sing}(X)$. Moreover, $r >0$ and the tuples $(Y_i, (Y_i)_{\geq 0})$ are $(d-1)$-dimensional positive geometries with canonical forms $\omega((Y_i)_{\geq 0}) = {\rm Res}_{Y_i} \, \omega(X_{\geq 0})$. 
    \end{enumerate}
\end{definition}

The requirement that $Y_i \not \subseteq {\rm Sing}(X)$ in each step of the recursion of Definition \ref{def:posgeomABL} is quite restrictive, but necessary for our definition of the residue to make sense. This restriction can be dropped, but it makes the definition more technical. We shall explain this by examples (Examples \ref{ex:cusp} and \ref{ex:nodalexample}). First, we illustrate the definition as stated. 

\begin{example} \label{ex:linesegments}
    Let $X = \mathbb{P}^1$ and let $X_{\geq 0} = [a,b]$ with $a < b$ be a real interval on $X(\mathbb{R}) = \mathbb{RP}^1$. We assume that $X_{\geq 0} \subset \mathbb{C} \subset \mathbb{P}^1$ and $\mathbb{C}$ has coordinate $x$. The algebraic boundary of $X_{\geq 0}$ consists of two components $Y_1 = \{a \}$, $Y_2 = \{b\}$. Consider the rational one-form
    \[ \omega \, = \, \frac{b-a}{(x-a)(b-x)} \, {\rm d} x \, = \, {\rm dlog}(x-a) - {\rm dlog}(x-b).\]
    This form only has poles on $Y$, and these poles are simple ($\omega$ is a \emph{logarithmic} one-form on $X \setminus Y$). Moreover, any form with this property is a nonzero multiple of $\omega$. We check that ${\rm Res}_{a} \, \omega = 1$ and ${\rm Res}_b \,  \omega = -1$. We conclude that $(X,X_{\geq 0})$ is a positive geometry with $\omega(X_{\geq 0}) = \omega$. The orientation of ${\rm int}(X_{\geq 0}) = (a,b)$ is by ${\rm d} x$, i.e., from left to right.
\end{example}
\begin{example} \label{ex:posorth}
    Let $x$ be a coordinate on $\mathbb{C} \subset X = \mathbb{P}^1$ and let $X_{\geq 0}$ be the closure of $\mathbb{R}_+$ in $X$. One checks that $(X,X_{\geq 0})$ is a positive geometry with canonical form $\omega = \frac{{\rm d}x}{x}$. Indeed, $\omega$ has simple poles at $\{0, \infty\}$ and the residues are $\pm 1$ (this is basically Example \ref{ex:linesegments}). This positive geometry is often denoted by $\mathbb{P}^1_{\geq 0}$; the ``nonnegative points'' of $\mathbb{P}^1$.
    
    We increase the dimension by one, we set $X = \mathbb{P}^2$ and $X_{\geq 0}$ is the closure of the positive points $\mathbb{R}_+^2$. The algebraic boundary $Y$ consists of three divisors $Y_0$, $Y_1$ and $Y_2$, given by the vanishing of the three homogeneous coordinates $x_0, x_1, x_2$ respectively. Using coordinates $x = x_1/x_0,y = x_2/x_0$ on $\mathbb{C}^2 \subset \mathbb{P}^2$, we consider the rational two-form
    \[ \omega \, = \, \frac{1}{xy} \, {\rm d}x \wedge {\rm d}y. \]
    One checks that this has simple poles along each of the boundary components.
    Its residues along $Y_1$ and $Y_2$ were computed in Example \ref{ex:residues}. These are indeed the canonical forms of $\mathbb{P}^1_{\geq 0}$ with the appropriate orientation, where the positive quadrant is oriented counterclockwise. More generally, $(\mathbb{P}^d, \mathbb{P}^d_{\geq 0})$ is a positive geometry with $\omega(\mathbb{P}^d_{\geq 0}) =(x_1 \cdots x_d)^{-1} {\rm d}x_1 \wedge \cdots \wedge x_d$.
\end{example}

\begin{example} \label{ex:triangle}
    The non-negative part of a projective space is combinatorially a simplex. That is, $\mathbb{P}^d_{\geq 0}$ is a $d$-dimensional convex polytope with $d+1$ vertices in the real part of an affine chart containing it. For instance, in the chart $x_0 + x_1 + x_2 \neq 0$, the nonnegative projective plane $\mathbb{P}^2_{\geq 0}$ is the triangle bounded by $x \geq 0, y \geq 0, 1-x-y \geq 0$, where $x = x_1/(x_0+x_1+x_2)$ and $y = x_2/(x_0+x_1+x_2)$. In these coordinates, the canonical form is 
    \[ \omega \, = \, \frac{1}{xy(1-x-y)} \, {\rm d}x \wedge {\rm d} y.\]
    The three edges of the triangles are positive geometries in $\mathbb{P}^1$ whose canonical forms are the residues of $\omega$. We encourage the reader to check this explicitly. 
\end{example}

\begin{example}
    Here is a non-example: the real projective line $\mathbb{RP}^1$ is \emph{not} a positive geometry: its boundary is empty. A disk in $\mathbb{R}^2 \subset \mathbb{P}^2$ is not a positive geometry: its boundary is~$\mathbb{RP}^1$.
\end{example}

\begin{example} \label{ex:circlesegm}
    The circle $X$ in Figure \ref{fig:firstexamples} is parametrized by 
    \[ x(t) \, = \, \frac{2t}{t^2+1}, \quad y(t) \, = \, \frac{1-t^2}{t^2+1}.\]
    This parametrization identifies the curve segment $X_{\geq 0}$ with the line segment $[0,1]$, oriented from right to left. We claim that $(X, X_{\geq 0})$ is a positive geometry whose canonical form can be expressed as the restriction of the one-form 
    \[ \frac{1}{(1-x-y)} \, \frac{{\rm d}x}{y} \]
    to $X$. To check this, we compute the pullback of this form along $\varphi(t) = (x(t),y(t))$: 
    \[ \varphi^*\Big (\frac{1}{(1-x-y)} \, \frac{{\rm d}x}{y} \Big)_{|X} \, = \, \frac{t^2+1}{2(t^2-t)} \cdot \frac{t^2+1}{1-t^2} \cdot \frac{2(t^2+1)-(2t)^2}{(t^2+1)^2} \, {\rm d} t  \, = \, \frac{1}{t(t-1)} \, {\rm d}t.\]
    This is the correct form by Example \ref{ex:linesegments}.
\end{example}

\begin{example} \label{ex:pizza}
    We now replace the edge of the triangle in Example \ref{ex:triangle} contained in the line $1-x-y =0$ by a segment of a conic, as in the right part of Figure \ref{fig:firstexamples}. For simplicity, let us use the conic $Y_3$ locally given by $x^2 + y^2 = 1$ (this is the circle in the left part of Figure \ref{fig:firstexamples}). We claim that the resulting shape $X_{\geq 0}$ gives a positive geometry with canonical form 
    \begin{equation} \label{eq:canformpizza} \omega \, = \, \frac{1+x+y}{xy(1-x^2-y^2)} \, {\rm d}x \wedge {\rm d} y. \end{equation}
    This form clearly has the correct poles. The residues along $Y_1$ and $Y_2$ are easy: 
    \[ {\rm Res}_{x=0} \, \omega \, =\, \frac{-1}{y(1-y)} \,  {\rm d}y ,\quad {\rm Res}_{y=0} \, \omega \, = \, \frac{1}{x(1-x)} \, {\rm d}x.\]
    These are indeed the canonical forms of the boundary line segments, with the correct induced orientation. For the residue along $Y_3$ we compute that 
    \[ {\rm Res}_{Y_3} \, \omega \, = \, {\rm Res}_{Y_3} \Big (\frac{1+x+y}{xy(-2y)} \, {\rm d} x \wedge \frac{{\rm d}(1-x^2-y^2)}{1-x^2-y^2} \Big )\, = \, \Big ( - \frac{1 + x + y}{2xy^2} \, {\rm d} x \Big )_{|Y_3}\]
    To match this with what we found in Example \ref{ex:circlesegm}, notice that 
    \[\Big ( - \frac{1 + x + y}{2xy^2} \, {\rm d} x \Big )_{|Y_3} \, = \, \Big ( - \frac{(1 + x + y)(1-x-y)}{2xy^2(1-x-y)} \, {\rm d} x \Big )_{|Y_3} \, = \, \Big(\frac{1}{y(1-x-y)} \, {\rm d} x \Big )_{|Y_3}.   \qedhere \]
\end{example}

\begin{example} \label{ex:cusp}
This example, taken from \cite[Figure 4]{kohn2025adjoints}, illustrates one way to circumvent the assumption $Y_i \not \subseteq {\rm Sing}(X)$. The variety $X$ is $\mathbb{P}^2$, and $X_{\geq 0}$ is the shaded green area in Figure \ref{fig:cuspexample}. The boundary divisor $Y$ consists of two components: the parabola given in local coordinates by $y = x^2$, and the cuspidal cubic given by $x^2=y^3$. Note that $X$ is smooth and the assumption $Y_i \not \subseteq {\rm Sing}(X)$ is satisfied, but we will see that a problem occurs after taking the residue along $Y_2$. 
    \begin{figure}
        \centering
        \includegraphics[width=0.5\linewidth]{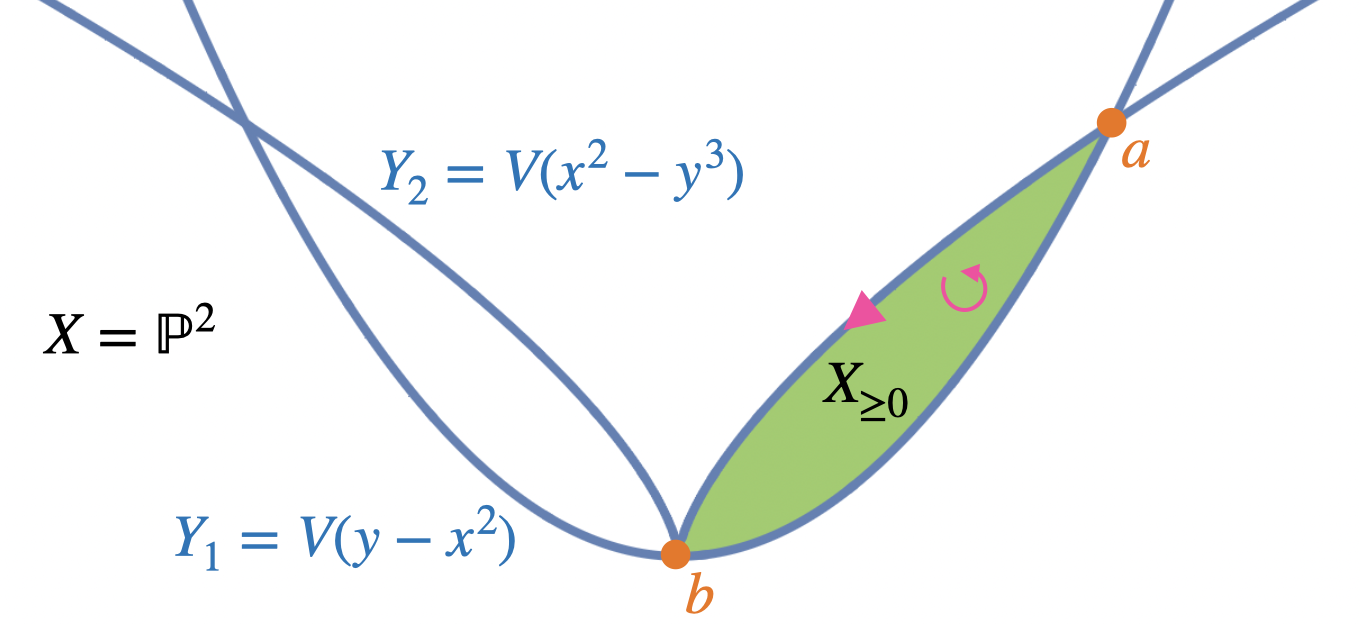}
        \caption{A positive geometry bounded by a parabola and a cuspidal cubic.}
        \label{fig:cuspexample}
    \end{figure}
We shall try to match Definition \ref{def:posgeomABL} with 
\[ \omega(X_{\geq 0}) \, = \, \frac{x^2+y^2+xy+x}{(y-x^2)(x^2-y^3)}\,  {\rm d} x \wedge {\rm d} y.\]
This form clearly has the correct poles, so we must only check the recursive condition on the residues. To compute ${\rm Res}_{Y_2} \, \omega(X_{\geq 0})$, we write our form as follows: 
\[ \omega(X_{\geq 0}) \, = \, \Big ( \frac{x^2+y^2+xy+x}{(y-x^2)(-3y^2)} \, {\rm d} x  \Big ) \wedge \frac{{\rm d}(x^2-y^3)}{x^2-y^3}.\]
The residue ${\rm Res}_{Y_2} \, \omega(X_{\geq 0})$ is the restriction of the following one-form to the curve $Y_2$: 
\[ \eta \, = \, \frac{x^2+y^2+xy+x}{(y-x^2)(-3y^2)} \, {\rm d} x.\]
In order to satisfy the definition, this restriction $\eta_{|Y_2}$ should be the canonical form of the curve segment connecting the points $a$ and $b$ along the cuspidal cubic $Y_2$. However, one of the components of the boundary of this segment, namely, the point $b$, is contained in the singular locus of $Y_2$. To proceed, we must define what it means to ``take the residue of $\eta_{|Y_2}$ at $b$''. 

One way to do this is by resolving the cusp singularity. Consider the map $\pi: \mathbb{C} \rightarrow Y_2$ given by $t \mapsto (t^3,t^2)$. The closure of the pre-image of ${\rm int}((Y_2)_{\geq 0})$ is the line segment $[0,1]$, oriented from right to left. We have $0 = \pi^{-1} (b)$ and $1 = \pi^{-1}(a)$. We require that $[0,1]$ is a positive geometry whose canonical form is the pullback $\pi^*(\eta_{|Y_2})$. The form $\pi^*(\eta_{|Y_2})$ is a rational form on the smooth curve $\mathbb{C}$, computed by substituting $x = t^3, y = t^2$ into $\eta$: 
\[ \pi^*(\eta_{|Y_2}) \, = \, \frac{t^6 + t^4 + t^5 + t^3}{(t^2-t^6)(-3t^4)} \, 3 t^2 \, {\rm d}t \, = \, \frac{1}{t(t-1)} \, {\rm d} t.\]
This is indeed the canonical form of $[0,1]$ with orientation $-{\rm d}t$, see Example \ref{ex:linesegments}.

In general, if a component of $Y$ is contained in the singular locus of $X$, one can consider the normalization map $\pi: X' \rightarrow X$ and replace $X_{\geq 0}$ by the closure $X'_{\geq 0}$ of $\pi^{-1}({\rm int}(X_{\geq 0}))$ in $X'(\mathbb{R})$. The singular locus of the normal variety $X'$ has codimension $\geq 2$, so no component of the algebraic boundary of $X'_{\geq 0}$ is contained in it. One can adapt Definition \ref{def:posgeomABL} by calling $(X,X_{\geq 0})$ a positive geometry with canonical form $\omega$ if the form $\pi^*\omega$ satisfies the recursive residue condition for $(X',X'_{\geq 0})$. More resolutions of singularities may be necessary further down in the recursion. Spelling this out more formally is beyond the scope of these lectures.
\end{example}

\begin{example} \label{ex:nodalexample}
    This example hints at another way of making Definition \ref{def:posgeomABL} more inclusive. Namely, by dealing in a more satisfactory way with singular points of $Y_i$ on the boundary $\partial X_{\geq 0}$. Consider the nodal cubic $Y \subset X = \mathbb{P}^2$ locally defined by $y^2 = x^2(x+1)$. Let $X_{\geq 0}$ be the semi-algebraic set given by $\{ (x,y) \in \mathbb{R}^2 \, : \, y^2-x^2(x+1) \leq 0 \text{ and } x \leq 0 \}$. This is the bounded ``drop-shaped'' region of the complement of $Y$ in $\mathbb{R}^2$. Consider the differential form 
    \[ \omega \, = \, \frac{2}{y^2-x^2(x+1)} \, {\rm d} x \wedge {\rm d} y.\]
    The boundary $Y$ consists of a single component, and $\omega$ has a simple pole along $Y$. The residue along $Y$ is the restriction of $\eta = \frac{{\rm d}x}{y}$ to $Y$. This has no pole on $Y \setminus {\rm Sing}(Y)$, but $Y_{\geq 0}$ has no boundary $(r = 0)$. Hence, $(X,X_{\geq 0})$ is not a positive geometry according to Definition \ref{def:posgeomABL}. 
    
    To remedy this, we blow up $\mathbb{P}^2$ at $(0,0)$ and write the blow-up map as $\pi: X' \rightarrow X$. We let $X'_{\geq 0}$ be the closure of $\pi^{-1}({\rm int}(X_{\geq 0}))$ in $X'(\mathbb{R})$, see Figure \ref{fig:nodeexample}. The algebraic boundary of $X'_{\geq 0}$ consists of the exceptional divisor $Y_0$ and the strict transform $Y'$ of $Y$. We check that $(X', X'_{\geq 0})$ is a positive geometry in the sense of Definition \ref{def:posgeomABL}, with canonical form $\omega' = \pi^* \omega$.
    \begin{figure}
        \centering
        \includegraphics[width=0.35\linewidth]{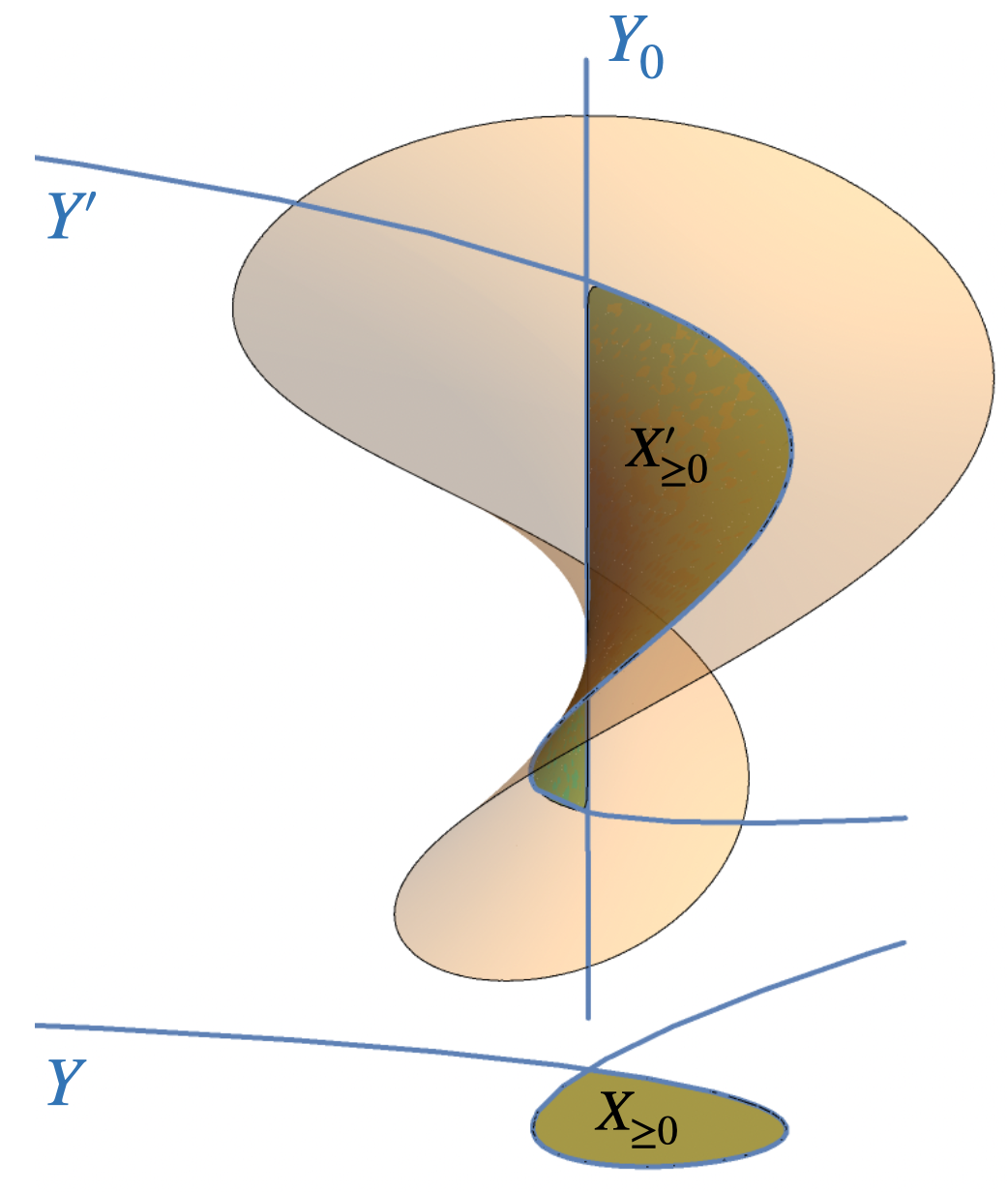}
        \caption{A positive geometry after blowing up a node.}
        \label{fig:nodeexample}
    \end{figure}
    The surface $X'$ is locally given by the equation $zx-y = 0$. Notice that $Y_0$ is locally given by $x=0$ and we can use $z$ as a local coordinate on $Y_0$ to compute ${\rm Res}_{Y_0} \omega'$. We calculate that
    \[ \omega' \, = \, \frac{2}{y^2-x^2(x+1)} \, {\rm d} x \wedge {\rm d} y \, = \, \frac{2\, x}{y^2-x^2(x+1)} \, {\rm d} x \wedge {\rm d} z \, = \, \frac{2}{z^2-x-1} \, \frac{{\rm d} x}{x} \wedge {\rm d} z.\]
    Hence ${\rm Res}_{Y_0} \omega' = -2(z^2-1)^{-1} \, {\rm d}z$. This is the canonical form of the line segment $(Y_0)_{\geq 0}$ by Example \ref{ex:linesegments}. For the residue along $Y'$, we use the parametrization $t \mapsto (t^2-1,t^3-t,t)$ of $Y'$. This identifies $Y'_{\geq 0}$ with the line segment $[-1,1]$, oriented from right to left. We have 
    \[ \Big ( \frac{1}{y} \, {\rm d} x \Big )_{|Y'} \, = \, \frac{1}{(t^3-t)} \, {\rm d}(t^2-1) \, = \, \frac{2}{t^2-1} \, {\rm d}t.\]
    Again, this is the correct form by Example \ref{ex:linesegments}.
\end{example}

The positive geometries encountered in later lectures align with Definition \ref{def:posgeomABL}. For completeness, we shall now give another candidate definition of a positive geometry which is inspired by (but not explicitly stated in) the paper \cite{brown2025positive}. The data going into this definition are a $d$-dimensional irreducible complex projective variety $X$ and a closed subvariety $Y \subsetneq X$ such that $X \setminus Y$ is smooth. A \emph{relative $d$-cycle} of the pair $(X,Y)$ is a $d$-dimensional chain in $X$ with boundary contained in $Y$. The homology classes of such chains form the relative homology group $H_d(X,Y)$. The pair $(X,Y)$ is said to have \emph{genus zero} if the Hodge numbers $h^{-p,0}$ of $H_d(X,Y)$ for $p >0$ are all equal to zero. For such genus zero pairs, Brown and Dupont observe \cite[Definition 2.6]{brown2025positive} that there is a canonical~map 
\begin{equation} \label{eq:browndupont}
    \omega: H_d(X,Y) \longrightarrow \Omega_{\rm log}^d(X\setminus Y)
\end{equation}
which associates a logarithmic differential form $\omega(\sigma)$ to a relative homology class $\sigma$. If $Y \subset X$ is a divisor, i.e., $Y$ is pure of dimension $d - 1$, then $\Omega_{\rm log}^d(X\setminus Y)$ is contained in the rational forms on $X$ with at most simple poles along the irreducible components of $Y$.

\begin{definition} \label{def:posgeomBD}
    A \emph{positive geometry} is a relative homology class $\sigma \in H_d(X,Y)$ of a genus zero pair $(X,Y)$. The \emph{canonical form} of $\sigma$ is $\omega(\sigma)$, where $\omega$ is the map from \eqref{eq:browndupont}.
\end{definition}

We point out that \cite[Section 3.2]{brown2025positive} provides many tools for determining the genus of a pair $(X,Y)$. In particular, all pairs $(X,Y)$ in the above examples have genus zero. We encourage the reader to show this using Sections 3.3.2 and 3.3.3 in \cite{brown2025positive}.

It is clear that Definitions \ref{def:posgeomABL} and \ref{def:posgeomBD} are not equivalent. A pair $(X,X_{\geq 0})$ for which $\sigma = [X_{\geq 0}] \in H_d(X,Y)$ is a positive geometry in the sense of Definition \ref{def:posgeomBD} might not be a positive geometry in the sense of Definition \ref{def:posgeomABL}. For instance, $X_{\geq 0}$ might not be contained in the real points of $X$. Moreover, Definition \ref{def:posgeomBD} does not require $\omega(\sigma)$ to be nonzero. A more delicate example arises from the fact that a ``boundary pair'' of a genus zero pair $(X,Y)$ need not have genus zero, see \cite[Section 5.9]{brown2025positive}. Hence, positive geometries in the sense of Definition \ref{def:posgeomBD} may have boundaries which are not positive geometries. However, for boundary pairs which are genus zero, the recursive property from Definition \ref{def:posgeomABL} holds: the residues along the boundary components are the canonical forms of the boundaries. Moreover, if this holds for all boundaries, then the residues determine $\omega(\sigma)$ uniquely, and $\omega(\sigma)$ must equal the canonical form in the sense of Definition \ref{def:posgeomABL}. For precise statements, see \cite[Propositions 2.14 and 2.15]{brown2025positive}. This makes it quite hard, if not impossible, to find an example of a positive geometry in the sense of Definition \ref{def:posgeomABL} which does not fit Definition \ref{def:posgeomBD}. It would be interesting to find such an example or to prove that this is impossible.

In summary, to my knowledge, Definition \ref{def:posgeomABL} is more restrictive than Definition \ref{def:posgeomBD} and it is more technical to state, especially if one wants to allow the positive geometries appearing in Examples \ref{ex:cusp} and \ref{ex:nodalexample}. On the other hand, Definition \ref{def:posgeomBD} does not guarantee the full recursive structure imposed by Definition \ref{def:posgeomABL}. This is perhaps undesirable in the physics application, which is a potential criticism for my interpretation in Definition \ref{def:posgeomBD}. Maybe not \emph{all} genus zero pairs should be called positive geometries. Another such criticism is that Definition \ref{def:posgeomBD} does not talk about the real points of $X$ and $Y$ (it is hardly \emph{positive}). This discussion confirms that positive geometry is still in its infancy; we have yet to determine the correct definition(s). Nevertheless, there is a consensus that certain examples should certainly fall under any reasonable definition of ``positive geometry''. Among those are polyhedra in projective space $X = \mathbb{P}^d$. Their canonical forms are the topic of the next section.

\begin{remark}
    The papers \cite{arkani2017positive,lam2024invitation} provide a zoo of examples of positive geometries. In these notes, we limit ourselves to the well understood cases of convex polytopes (Section \ref{sec:2}) and planar positive geometries (Section \ref{sec:3}). Many positive geometries of interest in physics are semi-algebraic subsets of the real points of a Grassmannian. For instance, the tree-level amplituhedron ${\cal A}_{k,n,m}$ lives in $X = {\rm Gr}(k,k+m)$. See \cite{de2024amplituhedron} for a gentle introduction. While the amplituhedron is one of the main motivations for the development of positive geometry, to my knowledge it is only known that ${\cal A}_{k,n,m}$ is a positive geometry in the sense of Definition \ref{def:posgeomABL} for $k = 1$, in which case it is a cyclic polytope, and for $k = m = 2$ \cite{ranestad2024adjoints}. It is conjectured to be a positive geometry for arbitrary $k,n,m$. For more on positive geometries in $X = {\rm Gr}(2,4)$, see \cite{pavlov2025positive}. For positive geometries in the wonderful compactification $X$ of a hyperplane arrangement complement, see \cite{brauner2024wondertopes}. Lam discusses the positive geometry of the moduli space $X = \overline{{\cal M}}_{0,n}$ in \cite{lam2024moduli}. Finally, in \cite{Early2023Positive}, $X$ is a moduli space of del Pezzo surfaces. 
\end{remark}

\section{Positive geometry of polytopes} \label{sec:2}

A (convex) \emph{polytope} in $\mathbb{R}^d$ is the convex hull of a finite set of points. That is, it is the smallest convex set containing these points. Alternatively, a polytope can be described as the set of solutions to a finite set of affine-linear inequalities, and a set of such inequalities defines a polytope as long as the solution set is bounded. A list of $n$ affine-linear inequalities in $d$ variables is represented by a matrix $U \in \mathbb{R}^{n \times d}$ of rank $d$, and a vector $z \in \mathbb{R}^n$. Each row $u_i \in (\mathbb{R}^d)^\vee$ of $U$ and entry $z_i \in \mathbb{R}$ of $z$ define one of the inequalities: 
\begin{equation} \label{eq:facetrep} P \, = \, \{ y \in \mathbb{R}^d \, : \, U \cdot y + z \geq 0 \}, \end{equation}
where $U \cdot y + z \geq 0$ is short for $u_i \cdot y + z_i \geq 0, \, i = 1, \ldots, n$. The dimension of $P$ is the dimension of the smallest affine space containing $P$. For a vector $u \in (\mathbb{R}^d)^\vee$, we define the set $P_u = \{ y \in P\, : \, u \cdot y = {\rm min}_{y' \in P} \, u \cdot y' \}$, which is itself a convex polytope. All polytopes arising in this way are called \emph{faces} of $P$. Faces of dimension $\dim P - 1$ are called \emph{facets}. 
\begin{example} \label{ex:polytopes}
    Polytopes in $\mathbb{R}^1$ are line segments. Polytopes in $\mathbb{R}^2$ are also called \emph{polygons}. An example is shown in the left part of Figure \ref{fig:polytopes}. This pentagon $P$ is the convex hull of its \emph{vertices} (i.e., zero-dimensional faces) $\{(-1,-1),(0,-1),(1,0),(1,1),(-1,1)\}$. We have 
    \[ P \, = \, \{ y \in \mathbb{R}^2 \, : \, U \cdot y + z \geq 0 \},\]
    where $U \in \mathbb{R}^{5 \times 2}$ and $z \in \mathbb{R}^5$ are given by 
    \[ U \, = \, \begin{pmatrix}
        1 & 0 & -1 & -1 & 0 \\ 0 & 1 & 1 & 0 & -1
    \end{pmatrix}^t, \quad z \, = \, \begin{pmatrix} 1 & 1 & 1 & 1 & 1 \end{pmatrix}^t.\]
    The inequalities read $y_1 + 1 \geq 0, y_2 + 1 \geq 0, -y_1 + y_2 +1 \geq 0, -y_1 + 1 \geq 0, -y_2 + 1 \geq 0$. The rows of $U$ are the \emph{inward pointing facet normals} of $P$. 
\begin{figure}
    \centering
    \includegraphics[width=0.5\linewidth]{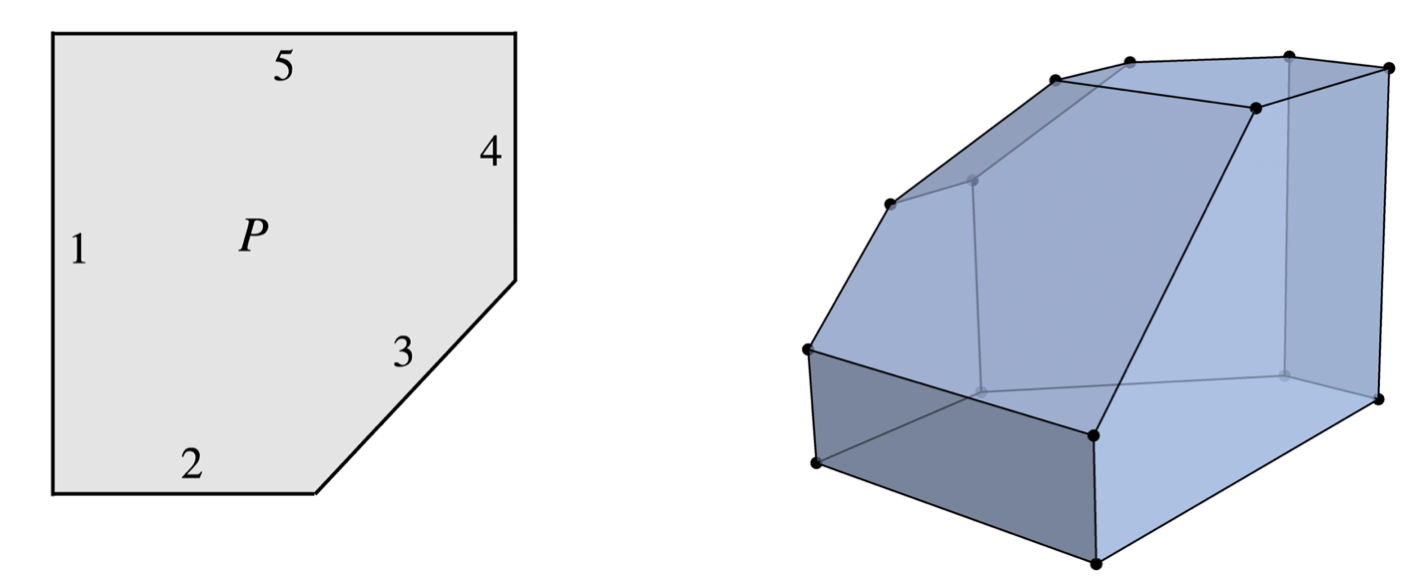}
    \caption{Two polytopes of dimension $d = 2$ (left) and $d = 3$ (right). Taken from \cite{telen2025toric}.}
    \label{fig:polytopes}
\end{figure}

The right part of Figure \ref{fig:polytopes} shows a three-dimensional polytope given by $U \cdot y + z \geq 0$ with 
\[ U \, = \, \begin{pmatrix}
    -1 & 0 & 0 & 1 & 1 & 1 & 0 & 0 & 0\\
    0 & -1 & 0 & -1 & 0 & 0 & 1 & 1 & 0 \\ 
    0 & 0 & -1 & 0 & -1 & 0 & -1 & 0 & 1
\end{pmatrix}^t\]
and $z = (3,4,3,2,2,0,1,0,0)$. You will compute its vertices as an exercise (Exercise \ref{ex:vertassoc}). 
\end{example}
We will assume that $P \subset \mathbb{R}^d$ has dimension $d$, and that the representation \eqref{eq:facetrep} of $P$ is \emph{minimal}, which means that the rows of $U$ are the inward pointing normal vectors to the facets of $P$, as in Example \ref{ex:polytopes}. The set of facets of $P$ is denoted by ${\cal F}(P)$. We write $u_F$ for the row of $U$ corresponding to the facet $F = P_{u_F} \in {\cal F}(P)$, and $z_F$ for the corresponding entry of $z$. For each facet $F \in {\cal F}(P)$, let $H_F \subset \mathbb{P}^d$ be the complex hyperplane obtained as the Zariski closure of $\{y \in \mathbb{R}^d \, : \, u_F \cdot y + z_F = 0\}$. The union of these hyperplanes is the \emph{facet hyperplane arrangement} $Y_P = \bigcup_{F \in {\cal F}(P)} H_F$. The set of vertices of $P$ is denoted by ${\cal V}(P)$. For each vertex $v \in {\cal V}(P)$, we write $U_v$ for the submatrix of $U$ consisting of rows $u_F$ for which $v \in F$. The polytope $P$ is called \emph{simple} if each vertex $v \in {\cal V}(P)$ is contained in precisely $d$ facets. If $P$ is simple, then each matrix $U_v, v \in {\cal V}(P)$ is square and invertible. We will prove the following:

\begin{theorem} \label{thm:canformpolytope}
    Let $P \subset \mathbb{R}^d \subset \mathbb{RP}^d$ be a $d$-dimensional simple polytope. We have that $(\mathbb{P}^d, P)$ is a positive geometry in the sense of Definition \ref{def:posgeomABL}. Its canonical form is 
    \[ \omega(P) \, = \, \sum_{v \in {\cal V}(P)} \frac{ |\det U_v | }{\prod_{\substack{F \in {\cal F}(P) \\ v \in F }} (u_F \cdot y + z_F)} \, {\rm d }y.\]
    Here the product is over facets containing $v$. Moreover, the class $\sigma_P = [P] \in H_d(\mathbb{P}^d, Y_P)$ is a positive geometry in the sense of Definition \ref{def:posgeomBD}. The image of $\sigma_P$ under \eqref{eq:browndupont} equals $\omega(P)$.
\end{theorem}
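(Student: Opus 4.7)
The plan is to proceed by induction on $d$, deducing uniqueness at each step from the fact that a global holomorphic $d$-form on $\mathbb{P}^d$ is zero. The base case $d=1$ is a direct check against Example \ref{ex:linesegments}: for $P = [a,b]$, the vertices $v=a$ and $v=b$ have $|\det U_v| = 1$, and the vertex sum yields $\frac{1}{y-a}\, {\rm d}y + \frac{1}{b-y}\, {\rm d}y = \frac{b-a}{(y-a)(b-y)}\, {\rm d}y$.

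For the inductive step, the conceptual observation driving the computation is that after the affine change of coordinates $z = U_v(y-v)$, the summand $\omega_v = \frac{|\det U_v|}{\prod_{F \ni v} L_F} \, {\rm d}y$ becomes, up to a sign, the canonical form $\frac{{\rm d}z_1 \wedge \cdots \wedge {\rm d}z_d}{z_1 \cdots z_d}$ of the positive orthant (Example \ref{ex:posorth}). In other words, $\omega(P)$ is presented as a sum of local canonical forms of the tangent cones of $P$ at its vertices, with $|\det U_v|$ as the Jacobian correction. The form manifestly has at most simple poles along each hyperplane $H_F$ in the affine chart $\mathbb{R}^d$, and only vertices $v \in F$ contribute to the residue along $H_F$. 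In the $z$-coordinates at such a $v$, the equation $L_F$ becomes a coordinate $z_i$, and the residue along $z_i = 0$ is (up to sign) the canonical form of the positive orthant in $d-1$ dimensions, which is precisely the local summand at $v$ in the vertex-sum formula for the facet $F$, written in induced coordinates on $H_F$. Summing over $v \in {\cal V}(F)$ and applying the inductive hypothesis gives ${\rm Res}_{H_F} \, \omega(P) = \omega(F)$.

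The main obstacle is showing that $\omega(P)$ has no pole along the hyperplane at infinity $H_\infty \subset \mathbb{P}^d$, since $H_\infty$ is not a facet of $P$. Each summand is a rational function of degree $-d$, whereas the absence of a pole at infinity requires degree $-(d+1)$, so substantial cancellation must take place. Concretely this amounts to rewriting $\omega(P) = A_P(y)/\prod_F L_F(y) \, {\rm d}y$ for an ``adjoint polynomial'' $A_P$ of degree $n - d - 1$, with $n = |{\cal F}(P)|$. One route is to leverage the boundedness of $P$ (so $H_\infty \cap P = \emptyset$) together with induction: if $\omega(P)$ had a nonzero residue along $H_\infty$, that residue would by the same computation match the canonical form of the empty face, hence vanish; a direct expansion then rules out the pole itself. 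Once no pole lies outside $Y_P$, uniqueness is immediate: any other rational $d$-form with simple poles along $Y_P$ and matching residues differs from $\omega(P)$ by a holomorphic $d$-form on $\mathbb{P}^d$, which vanishes as $H^0(\mathbb{P}^d, \Omega^d_{\mathbb{P}^d}) = 0$. This settles the ABL part and the explicit formula.

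For the Brown--Dupont part, the pair $(\mathbb{P}^d, Y_P)$ has genus zero because $Y_P$ is a projective hyperplane arrangement, whose complement carries a mixed Hodge structure of Tate type; the criteria of \cite[Section 3.3.3]{brown2025positive} then apply. All iterated boundary pairs in the recursion are again of this form (each face of $P$ is a simple polytope inside a projective subspace, bounded by its own facet hyperplane arrangement), so the recursive residue property of \cite[Proposition 2.14]{brown2025positive} propagates through every stage. Hence the image $\omega(\sigma_P)$ under the Brown--Dupont map satisfies the same recursive residue conditions verified above for the vertex-sum formula, and by the uniqueness argument it must coincide with $\omega(P)$.
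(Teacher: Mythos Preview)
Your inductive strategy, the base case, and the residue computation along facet hyperplanes are essentially the paper's proof. The paper also fixes a facet, changes coordinates so that it becomes $y_d=0$, and reads off that the residue reproduces the vertex-sum formula for the facet; your ``tangent cone at $v$ becomes the positive orthant'' viewpoint is a pleasant rephrasing of the same calculation.

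The gap is in your treatment of the hyperplane at infinity. For a genuine facet hyperplane $H_F$, only those summands $\omega_v$ with $v\in F$ carry $L_F$ in the denominator, so only they contribute to ${\rm Res}_{H_F}$; that is why the residue collapses to the vertex-sum formula for $F$. Along $H_\infty$ the situation is entirely different: \emph{every} summand $\omega_v$ has degree $-d$ rather than $-(d{+}1)$, hence every $\omega_v$ has a simple pole on $H_\infty$ and contributes to the residue. So ``by the same computation the residue matches the canonical form of the empty face'' is not valid --- the residue is a sum over all of ${\cal V}(P)$, and its vanishing is a genuine combinatorial identity (essentially ${\rm Adj}_P(U\cdot y)=0$, cf.\ Proposition~\ref{prop:univadjoint} and \cite[Theorem 3.10]{telen2025toric}), not an empty sum. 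Your ``direct expansion'' remark does not supply this identity.

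The paper sidesteps this issue: it establishes $\omega(P)\in\Omega^d_{\rm log}(\mathbb{P}^d\setminus Y_P)$ --- which in particular rules out any pole on $H_\infty$ --- by inductively applying \cite[Proposition~1.15]{brown2025positive}, and then invokes \cite[Propositions~2.14 and~2.15]{brown2025positive} for uniqueness. Your uniqueness argument via $H^0(\mathbb{P}^d,\Omega^d_{\mathbb{P}^d})=0$ is correct and more elementary than the paper's citation, but it only applies once the pole at infinity has been excluded. The Brown--Dupont portion of your proposal matches the paper (which cites \cite[Proposition~3.26]{brown2025positive} for genus zero).
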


\begin{example} \label{ex:pentagon1}
    The reader should match the formula of Theorem \ref{thm:canformpolytope} with the canonical form of the triangle in Example \ref{ex:triangle}. The canonical form of the pentagon $P$ in Example \ref{ex:polytopes} is 
    \begin{align*} \omega(P) \, = \, & \, \frac{{\rm d}y_1 \wedge {\rm d}y_2}{(y_1+1)(y_2+1)} \, + \,  \frac{{\rm d}y_1 \wedge {\rm d}y_2}{(y_2+1)(-y_1+y_2+1)} \, + \,  \frac{{\rm d}y_1 \wedge {\rm d}y_2}{(-y_1+y_2+1)(-y_1+1)} \\[0.75em]
    & + \,   \frac{{\rm d}y_1 \wedge {\rm d}y_2}{(-y_1+1)(-y_2+1)} \, + \,    \frac{{\rm d}y_1 \wedge {\rm d}y_2}{(y_1+1)(-y_2+1)} \\[0.75em]
    \, = \, & \, \, \frac{5 - 3\, y_1 + 3 \, y_2 - y_1y_2}{(y_1+1)(y_2+1)(-y_1+y_2+1)(-y_1+1)(-y_2+1)} \, {\rm d} y_1 \wedge {\rm d} y_2. \qedhere
    \end{align*}
\end{example}

\begin{proof}[Proof of Theorem \ref{thm:canformpolytope}]
    We prove the first claim by induction on $d$. The facet representation of the line segment $[a,b]$ is $\left ( \begin{smallmatrix}
        1 \\ -1
    \end{smallmatrix}\right) y + \left ( \begin{smallmatrix}
        -a \\ b
    \end{smallmatrix}\right) \geq 0$. One easily matches $\omega(P)$ with Example \ref{ex:linesegments}. Hence, the statement holds for $d = 1$.
    For $d > 1$, fix a facet $F_1 \in {\cal F}(P)$. By changing coordinates on $\mathbb{R}^d$, we may assume that $H_{F_1} = Y_1$ is given by $y_d = 0$. The residue along $Y_1$ is 
    \[{\rm Res}_{Y_1} \, \omega(P) \, = \, \left ( \sum_{v \in {\cal V}(F_1)} \frac{ |\det U_v | }{\prod_{\substack{F \in {\cal F}(P) \setminus F_1\\ v \in F }} (u_F \cdot y + z_F)} \right )_{|y_d = 0} \, {\rm d }y_1 \wedge \cdots \wedge {\rm d} y_{d-1}. \]
    The facets appearing in the denominator intersect $F_1$ in $v$. Since $P$ is simple, all such facets of $P$ intersect $F_1$ in one of its $(d-2)$-dimensional facets.
    We view $F_1$ as a polytope in $\mathbb{R}^{d-1}$ and write its minimal facet description as $U' \cdot y' + z' \geq 0$, where $y' = (y_1, \ldots, y_{d-1})$. The rows of $U'$ are $u'_F$ for $F \in {\cal F}(F_1)$. The matrix $U'$ is a submatrix of $U$ satisfying $|\det U'_v|= | \det U_v|$ for each $v \in {\cal V}(F_1) \subset {\cal V}(P)$. Here $U'_{v}$ is a $(d-1) \times (d-1)$ matrix defined for $F_1$ as $U_v$ was defined for $P$. Putting all of this together, we find the formula
    \[ {\rm Res}_{Y_1} \, \omega(P) \, = \, \sum_{v \in {\cal V}(F_1)} \frac{ |\det U'_v |}{\prod_{\substack{F \in {\cal F}(F_1) \\ v \in F }} (u'_F \cdot y' + z'_F)}  \, {\rm d }y'. \]
    By the induction hypothesis, this is the canonical form of $F_1$ as a positive geometry in~$H_{F_1}$.

    Since the pair $(\mathbb{P}^d, Y_P)$ has genus zero \cite[Proposition 3.26]{brown2025positive}, $\sigma_P = [P] \in H_d(\mathbb{P}^d, Y_P)$ is a positive geometry in the sense of Definition \ref{def:posgeomBD}. To show that the map \eqref{eq:browndupont} gives the same form, notice that $\omega(P) \in \Omega^d_{\rm log}(\mathbb{P}^d \setminus Y_P)$. This is clear for $d = 1$ (Example \ref{ex:linesegments}), and can be shown for general $d$ by inductively applying \cite[Proposition 1.15]{brown2025positive}. Then, by \cite[Propositions 2.14 and 2.15]{brown2025positive}, $\omega(P)$ equals the unique form $\omega(\sigma_P) \in \Omega^d_{\rm log}(\mathbb{P} \setminus Y_P)$ with the prescribed residues. 
\end{proof}

We shall state an analogous statement to Theorem \ref{thm:canformpolytope} for arbitrary (possibly non-simple) polytopes by introducing the \emph{dual volume function}. For a $d$-dimensional polytope $P$, we define 
\[ P^\circ \, = \, \{ u \in (\mathbb{R}^d)^\vee \, : \, u \cdot y \geq -1, \text{ for all } y \in P \}.\]
This is a convex polyhedron called the \emph{polar dual} of $P$. If $0 \in {\rm int}(P)$, then $P^\circ$ is bounded. Importantly for us, in that case, the normalized volume of $P^{\circ}$ is bounded and given by 
\[ {\rm vol}(P^\circ) \, = \, d! \cdot \int_{P^{\circ}} 1 \cdot {\rm d}u_1 \cdots {\rm d}u_d. \]
\begin{example}
The polar dual $P^\circ$ of the pentagon $P$ from Example \ref{ex:polytopes} is shown in the left part of Figure \ref{fig:dual+adjoint}. Its vertices are given by the rows of $U$. We have ${\rm vol}(P^\circ) = 5$.
    \begin{figure}
        \centering
        \includegraphics[width=0.2\linewidth]{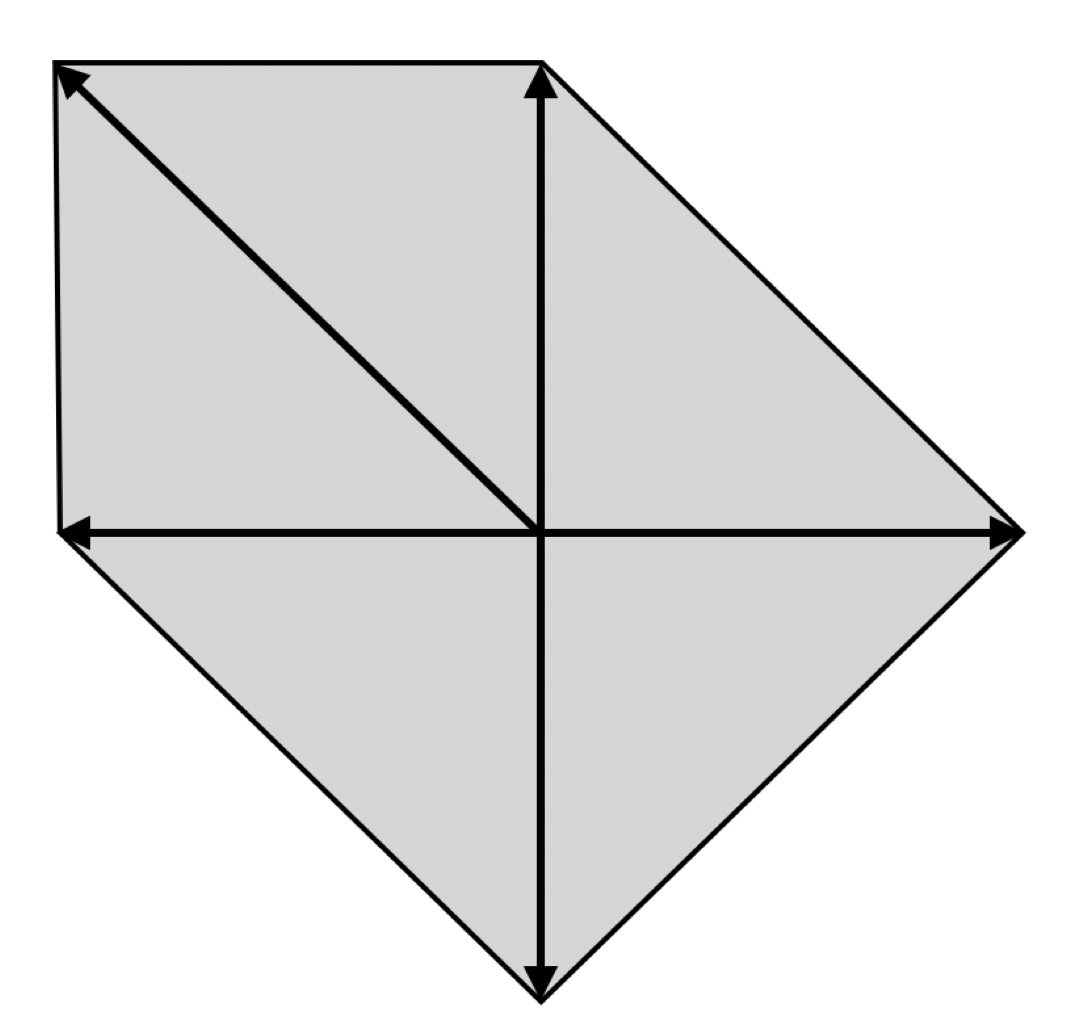} \quad \quad 
        \includegraphics[width = 0.2\linewidth]{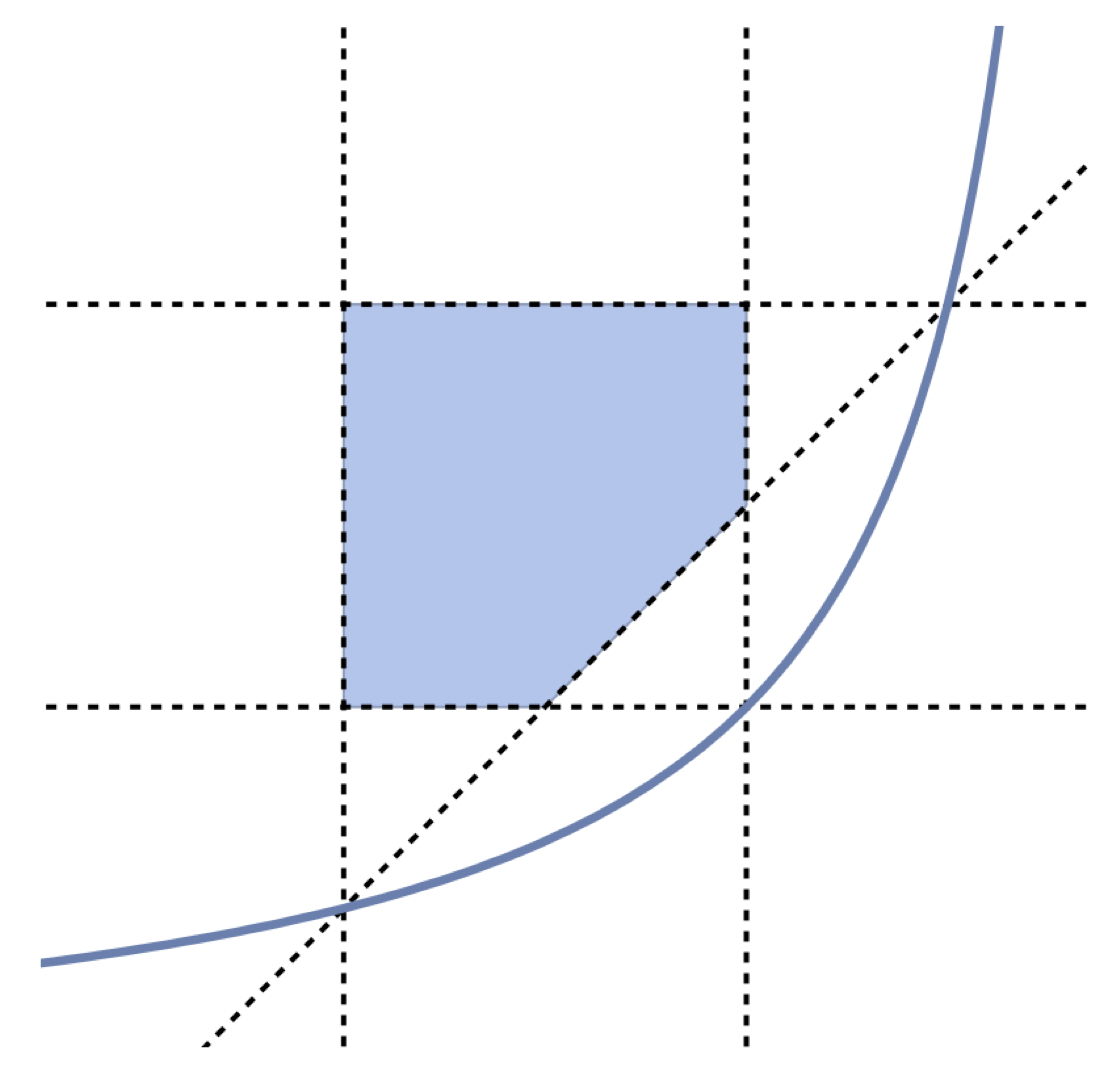}
        \caption{Left: the dual polygon of the pentagon $P$ in Example \ref{ex:polytopes}. Right: the adjoint curve of $P$. Both figures are taken from \cite{telen2025toric}.}
        \label{fig:dual+adjoint}
    \end{figure}
\end{example}
Finally, for $y \in \mathbb{R}^d$, let $P-y = \{ y' - y \, : \, y' \in P \}$ be the translate of $P$ by $y$.

\begin{theorem} \label{thm:canformpolytopesgeneral}
    Let $P \subset \mathbb{R}^d \subset \mathbb{RP}^d$ be a $d$-dimensional polytope. We have that $(\mathbb{P}^d, P)$ is a positive geometry in the sense of Definition \ref{def:posgeomABL}. Its canonical form is \[\omega(P) \, = \, f(y) \, {\rm d }y,\] where $f(y)$ is the meromorphic continuation to $\mathbb{P}^d$ of the dual volume function ${\rm int}(P) \mapsto \mathbb{R}_+$ given by $y \mapsto {\rm vol}((P-y)^\circ)$.
    Moreover, the class $\sigma_P = [P] \in H_d(\mathbb{P}^d, Y_P)$ is a positive geometry in the sense of Definition \ref{def:posgeomBD}. The image of $\sigma_P$ under \eqref{eq:browndupont} equals $\omega(P)$.
\end{theorem}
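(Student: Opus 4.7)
The plan is to reduce the general case to Theorem \ref{thm:canformpolytope} via triangulation, and then to identify the resulting form with the dual volume through a direct computation on simplices.

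\textbf{Triangulation and candidate form.} I would begin by fixing a triangulation $P = \Delta_1 \cup \cdots \cup \Delta_N$ into $d$-dimensional simplices with pairwise disjoint interiors and vertices contained in $\mathcal{V}(P)$; such triangulations always exist, e.g., via a pulling triangulation. Each $\Delta_i$ is simple, so Theorem \ref{thm:canformpolytope} supplies its canonical form $\omega(\Delta_i)$, and I set $\widetilde{\omega}(P) := \sum_{i=1}^N \omega(\Delta_i)$. I claim that $\widetilde{\omega}(P)$ is the canonical form of $(\mathbb{P}^d, P)$.

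\textbf{Residue analysis.} The form $\widetilde{\omega}(P)$ a priori has simple poles along every hyperplane spanning a facet of some $\Delta_i$. Such hyperplanes are either \emph{external}, contained in some $H_F$ with $F \in \mathcal{F}(P)$, or \emph{internal}, meaning shared $(d{-}1)$-walls $W$ of two adjacent simplices $\Delta_i,\Delta_j$ interior to $P$. For an internal wall $W$, the interiors ${\rm int}(\Delta_i)$ and ${\rm int}(\Delta_j)$ induce opposite orientations on $W$; since the canonical form of a polytope reverses sign under a flip of orientation (proved by induction on $d$, with base case Example \ref{ex:linesegments}), ${\rm Res}_W \omega(\Delta_i) + {\rm Res}_W \omega(\Delta_j) = 0$, so the pole along $W$ cancels. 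For a facet hyperplane $H_F$, the simplices with a facet in $H_F$ triangulate $F$, and by induction on $d$, together with the same wall cancellation applied to the $(d{-}1)$-dimensional triangulation of $F$, ${\rm Res}_{H_F} \widetilde{\omega}(P)$ equals the canonical form of $F$. Hence $\widetilde{\omega}(P)$ satisfies Definition \ref{def:posgeomABL}, the uniqueness clause implies independence of the triangulation, and we may define $\omega(P) := \widetilde{\omega}(P)$.

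\textbf{Dual volume identification and Brown--Dupont form.} For a single simplex $\Delta$ with facet inequalities $u_j \cdot y + z_j \geq 0$ and interior point $y$, the polar dual $(\Delta - y)^\circ$ is itself a $d$-simplex with vertices $u_j/(u_j \cdot y + z_j)$. A partial-fraction / Cramer-type expansion of the determinantal formula for the volume of a simplex yields
\[ {\rm vol}((\Delta - y)^\circ) \, = \, \sum_{v \in \mathcal{V}(\Delta)} \frac{|\det U_v|}{\prod_{F \ni v}(u_F \cdot y + z_F)}, \]
which matches Theorem \ref{thm:canformpolytope}; hence $\omega(\Delta) = {\rm vol}((\Delta - y)^\circ) \, dy$. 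Summing over the triangulation, both $\omega(P) = \sum_i \omega(\Delta_i)$ and $f(y)\, dy$ are rational $d$-forms on $\mathbb{P}^d$ with simple poles only along $Y_P$ and identical residues along each $H_F$; by the uniqueness clause in Definition \ref{def:posgeomABL}, they agree. Finally, the Brown--Dupont statement follows as in Theorem \ref{thm:canformpolytope}: $(\mathbb{P}^d, Y_P)$ has genus zero, $\omega(P) \in \Omega^d_{\rm log}(\mathbb{P}^d \setminus Y_P)$ by construction, and \cite[Propositions 2.14 and 2.15]{brown2025positive} identify it with $\omega(\sigma_P)$.

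\textbf{Main obstacle.} The subtlest step is the wall-cancellation argument. It is intuitively clear that two simplices induce opposite orientations on their common wall, but one must verify that the shared face is truly a full $(d{-}1)$-face of both simplices — otherwise the residues are not straightforwardly comparable. This is where restricting to triangulations with vertex set in $\mathcal{V}(P)$, whose facet-facet incidences are automatically full faces, becomes essential. A second nontrivial ingredient is the classical determinantal identity identifying ${\rm vol}((\Delta-y)^\circ)$ with the sum-over-vertices expression; proving this cleanly needs either careful manipulation of the determinant of differences of the vertex vectors $u_j/(u_j\cdot y + z_j)$, or an appeal to a Brion/Lawrence-type formula for volumes of simple polytopes.
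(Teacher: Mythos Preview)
Your triangulation argument establishing that $(\mathbb{P}^d,P)$ is a positive geometry with $\omega(P)=\sum_i\omega(\Delta_i)$ is sound and is precisely the mechanism behind Remark~\ref{rem:subdiv}. The paper itself does not give a self-contained proof of this theorem; it simply refers to \cite[Section~7.4]{arkani2017positive}, so in that respect your argument is more substantive than the paper's. The Brown--Dupont part is also fine and mirrors the paper's treatment of the simple case.

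The gap is in the dual volume identification. You show that $\omega(\Delta)={\rm vol}((\Delta-y)^\circ)\,dy$ for each simplex and that $\omega(P)=\sum_i\omega(\Delta_i)$, but you then assert without justification that $f(y)\,dy$ itself has simple poles only along $Y_P$ with residues equal to the canonical forms of the facets. This does not follow from what you have proved: polar duality does not respect unions, so the sum $\sum_i{\rm vol}((\Delta_i-y)^\circ)$ (read as a sum of analytically continued rational functions) is not \emph{a priori} equal to ${\rm vol}((P-y)^\circ)$, and your wall-cancellation argument says nothing about $f$. To close the gap you need an independent input, for instance: (i) a direct geometric argument that as $y\to H_F$ exactly one vertex $u_F/(u_F\cdot y+z_F)$ of $(P-y)^\circ$ escapes to infinity along a fixed ray, so the volume has a simple pole with residue ${\rm vol}((F-y')^\circ)$, which by induction is $\omega(F)$; or (ii) a perturbation to a nearby simple polytope $P_\varepsilon$, for which Theorem~\ref{thm:canformpolytope} together with Exercise~\ref{ex:dualvolume} already gives the identification, followed by a limit $\varepsilon\to 0$; or (iii) an appeal to Filliman duality or the Lawrence--Varchenko signed decomposition, which is exactly the additivity-under-triangulation statement you are missing. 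Your ``main obstacle'' paragraph invokes Brion/Lawrence only for the single-simplex identity, not for this additivity step, which is where it is actually needed.
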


\begin{proof}
    We refer to \cite[Section 7.4]{arkani2017positive} for the statement about Definition \ref{def:posgeomABL}. The statement about Definition \ref{def:posgeomBD} can be deduced as for simple polytopes. 
\end{proof}

You will match Theorems \ref{thm:canformpolytope} and \ref{thm:canformpolytopesgeneral} in Exercise \ref{ex:dualvolume}.

\begin{remark} \label{rem:subdiv}
    A different way to express the canonical form of a polytope $P$ uses \emph{triangulations}. Let $\Delta_1, \ldots, \Delta_r \subset \mathbb{R}^d$ be $d$-dimensional simplices (polytopes with $d+1$ vertices) such that $\bigcup_i \Delta_i = P$ and ${\rm int}(\Delta_i) \cap {\rm int}(\Delta_j) = \emptyset$ for $i \neq j$. The orientation of ${\rm int}(\Delta_i)$ is induced by that of $P$. The canonical form of $(\mathbb{P}^d, \Delta_i)$ is given by Theorem \ref{thm:canformpolytope}. By the \emph{triangulation property} of canonical forms, see \cite[Section 3]{arkani2017positive} and \cite[Theorem 11]{lam2024invitation}, we have 
    \[ \omega(P) \, = \, \omega(\Delta_1) + \cdots + \omega(\Delta_r). \]
    You will verify this for a three-dimensional pyramid in Exercise \ref{ex:pyramid}.
\end{remark}

We are now ready to illustrate the connection with scattering amplitudes. 

\begin{corollary}
    Let $P \subset \mathbb{R}^d$ be $d$-dimensional and simple and let \eqref{eq:facetrep} be a minimal facet representation of $P$. Consider the rational function 
    \begin{equation} \label{eq:toricamplitude} {\rm Amp}_P(x) \, = \, \sum_{v \in {\cal V}(P)} \frac{|\det U_v|}{\prod_{\substack{F \in {\cal F}(P) \\ v \in F}} x_F } \quad \in \, \mathbb{R}(x_F \, : \, F \in {\cal F}(P)).\end{equation}
    The canonical form of $P$ is given by ${\rm Amp}_P( U \cdot y + z) \, {\rm d}  y$.
\end{corollary}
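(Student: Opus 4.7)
The plan is to recognize this corollary as essentially a rebranding of Theorem \ref{thm:canformpolytope} in the language of ``amplitude variables'' $x_F$, one for each facet $F$. No new geometric content is required; the proof reduces to comparing two expressions term by term.

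First, I would recall from Theorem \ref{thm:canformpolytope} the explicit formula
\[ \omega(P) \, = \, \sum_{v \in {\cal V}(P)} \frac{|\det U_v|}{\prod_{\substack{F \in {\cal F}(P) \\ v \in F}} (u_F \cdot y + z_F)} \, {\rm d}y. \]
Next, I would unpack the notation $U \cdot y + z$: since the rows of $U$ are the facet normals $u_F$ and the entries of $z$ are the corresponding constants $z_F$, the vector $U \cdot y + z \in \mathbb{R}^n$ has $F$-th coordinate equal to $u_F \cdot y + z_F$. Substituting $x_F = u_F \cdot y + z_F$ into the defining sum of ${\rm Amp}_P(x)$ then reproduces verbatim the scalar coefficient of ${\rm d}y$ in $\omega(P)$, giving the claimed equality ${\rm Amp}_P(U \cdot y + z) \, {\rm d}y = \omega(P)$.

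The only point of care is the bookkeeping that the facet-to-row correspondence is consistent across $U$, $z$, and the indexing of the $x_F$; this is unambiguous given the minimal facet representation assumption, under which each facet $F$ determines a unique row of $U$ and hence a unique variable $x_F$. There is no real obstacle: the geometric content is entirely contained in Theorem \ref{thm:canformpolytope}, and the corollary merely highlights that the canonical form of a simple polytope is the pullback of a single universal rational function ${\rm Amp}_P$ on the space of facet variables along the affine map $y \mapsto U \cdot y + z$. This repackaging is precisely what links canonical forms to scattering amplitudes, since the function ${\rm Amp}_P$, rather than its pullback, is the quantity that appears in the physical expressions.
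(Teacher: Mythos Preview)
Your proposal is correct and matches the paper's approach exactly: the paper's own proof is the single sentence ``This is an immediate consequence of Theorem \ref{thm:canformpolytope}.'' Your write-up simply spells out the substitution $x_F = u_F \cdot y + z_F$ that makes this immediate, which is entirely appropriate.
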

\begin{proof}
    This is an immediate consequence of Theorem \ref{thm:canformpolytope}. 
\end{proof}

The function ${\rm Amp}_P(x)$ was called the \emph{toric amplitude} of $P$ in \cite{telen2025toric}. This is motivated by the following observation. 

\begin{example}\label{ex:physics}
    The pentagon $P$ in Example \ref{ex:polytopes} is a realization of the $2$-dimensional \emph{associahedron}. In general, the $d$-dimensional associahedron is a simple polytope whose vertices are the triangulations of the $(d+3)$-gon, and whose facets represent the diagonals of the $(d+3)$-gon. A collection of $d$ facets meets in a vertex if and only if the corresponding $d$ diagonals triangulate the $n$-gon, and the triangulation corresponds to the common vertex. This is illustrated in Figure \ref{fig:assoc} for $d = 2$. 
    \begin{figure}
        \centering
        \includegraphics[width=0.37\linewidth]{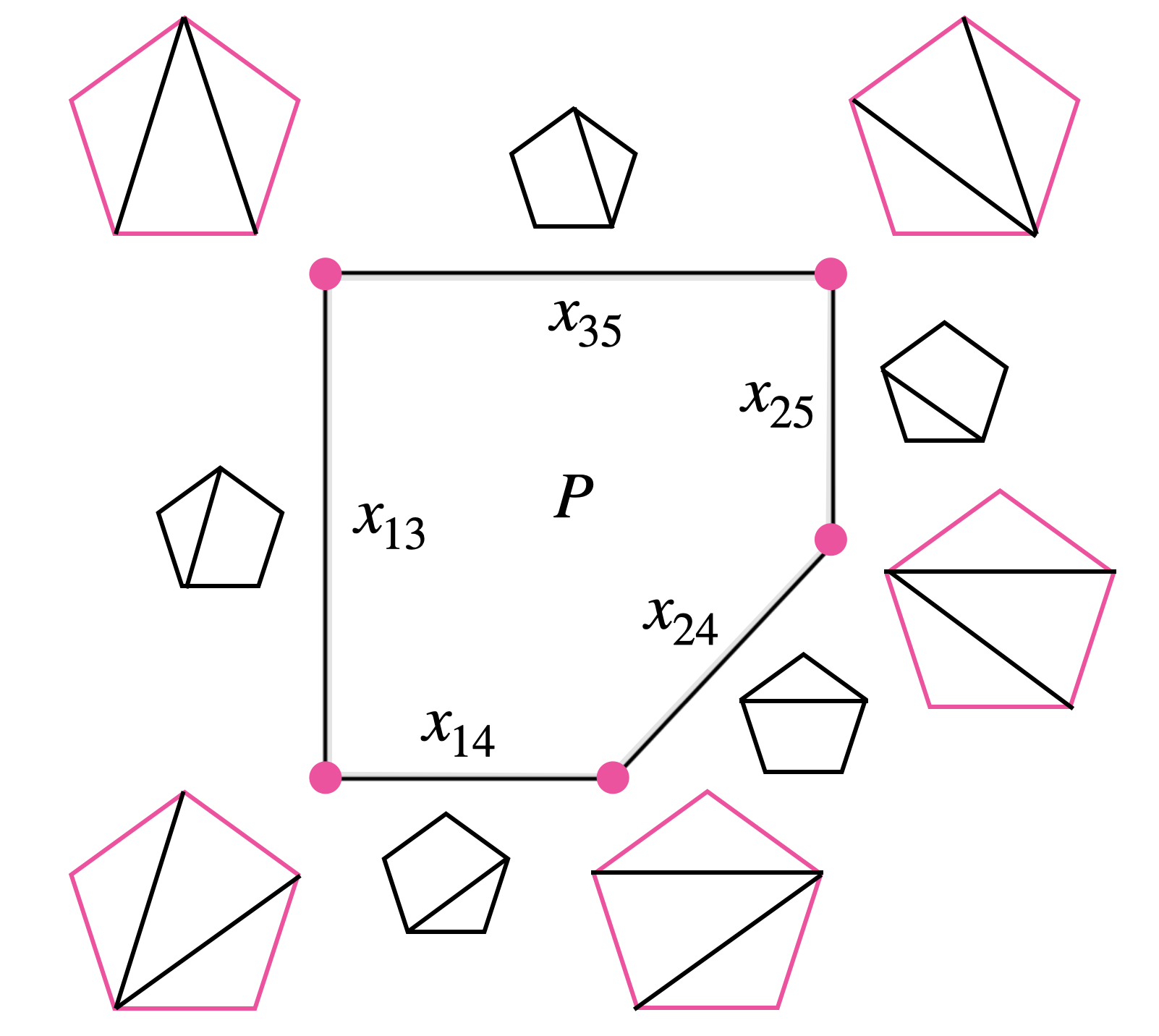}
        \caption{The $2$-dimensional associahedron.}
        \label{fig:assoc}
    \end{figure}
    Let us denote the facet variable $x_F$ from \eqref{eq:toricamplitude} by $x_{ij}$, where $F$ is the facet of the associahedron corresponding to the diagonal $ij$ of the pentagon, see the facet labeling in Figure \ref{fig:assoc}. The toric amplitude evaluates to the following function:
    \[ {\rm Amp}_P(x) \, = \, \frac{1}{x_{13}x_{14}} \, + \, \frac{1}{x_{14}x_{24}} \, + \, \frac{1}{x_{24}x_{25}} \, + \, \frac{1}{x_{25}x_{35}} \, + \, \frac{1}{x_{13}x_{35}} \,. \]
    This is the five point amplitude in biadjoint scalar $\phi^3$ theory, see \cite[Equation (3.24)]{arkani2018scattering}. To express this in terms of the momenta $p_1, \ldots, p_5$ of the five particles, one sets $x_{ij} = (p_i + \cdots + p_{j-1})^2$, where $p^2$ is the Minkowski inner product of $p$ with itself. The same construction applies for $m$ particles, where $m$ is arbitrary. The corresponding realization of the $(m-3)$-dimensional associahedron is called the \emph{ABHY associahedron}, after the authors of \cite{arkani2018scattering}.

    The right part of Figure \ref{fig:firstexamples} shows a three-dimensional ABHY associahedron. The toric amplitude is a sum over its vertices, and it equals the six point biadjoint scalar $\phi^3$ amplitude. Figure \ref{fig:assoc3} shows this function in terms of the \emph{Mandelstam variables} $s_{i, i+1} = x_{i,i+2} = (p_i + p_{i+1})^2$ and $s_{i,i+1,i+2} = x_{i,i+3} = (p_i + p_{i+1} + p_{i+2})^2$. The orange trees in the figure illustrate that the triangulations of the hexagon are dual to the planar trivalent trees with six labeled leaves. These are the \emph{Feynman diagrams}, which the amplitude is classically a sum over. 
    \begin{figure}
        \centering
        \includegraphics[width=0.9\linewidth]{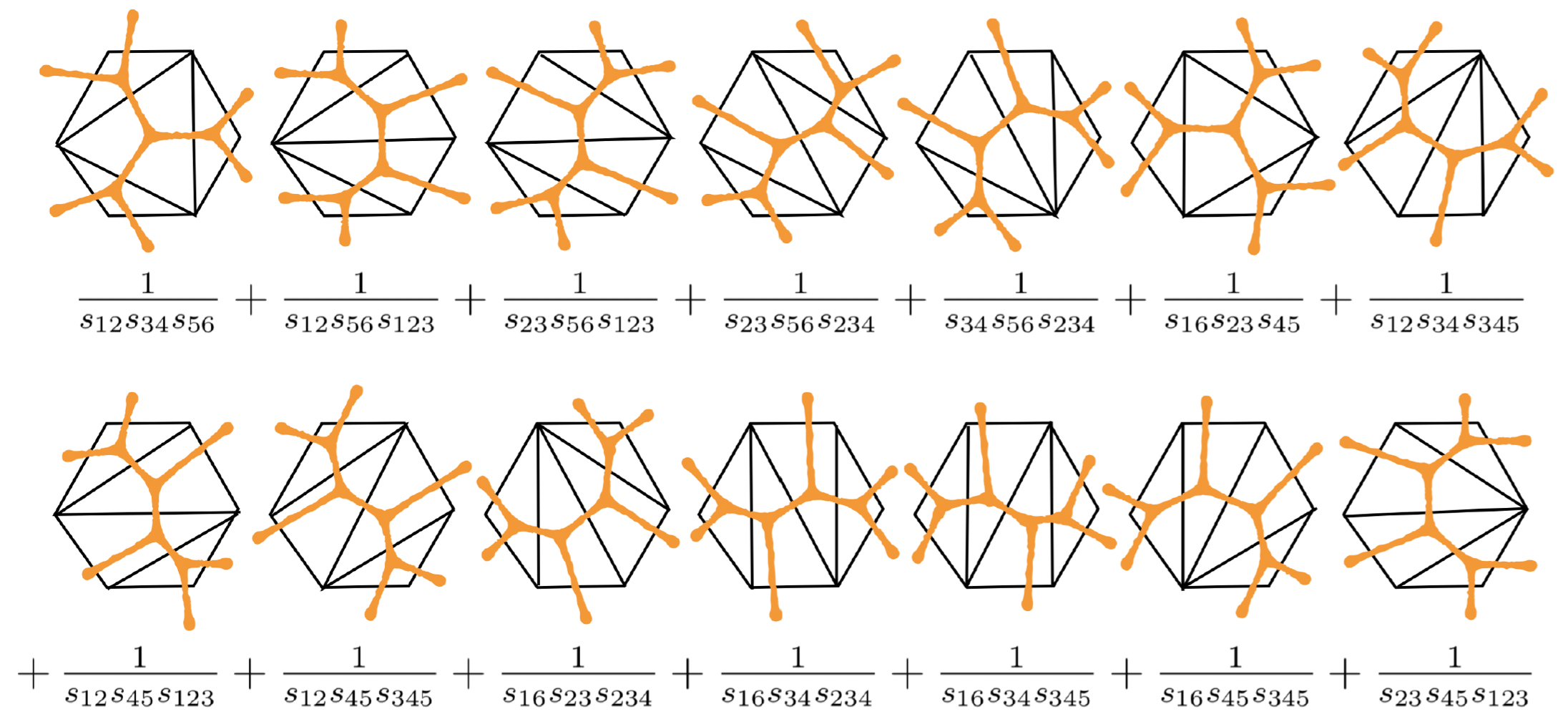}
        \caption{The amplitude as a sum over the 14 vertices of the 3D associahedron.}
        \label{fig:assoc3}
    \end{figure}
\end{example}

The \emph{poles} of the canonical form of $P$ are the facet hyperplanes $H_F, F \in {\cal F}(P)$. The \emph{zeros} form an interesting hypersurface, called the \emph{adjoint hypersurface} of $P$ \cite[Definition 2]{lam2024invitation}. To define this hypersurface, it is natural to work in homogeneous coordinates on $\mathbb{P}^d$. With a slight abuse of notation, we will switch from $y_1, \ldots, y_d$ to $y_0, y_1, \ldots, y_d$, where it is understood that $y_i$ above represents the coordinate $y_i/y_0$ in the chart $y_0 = 1$.

\begin{definition} \label{def:adjoint}
    Let $(X, X_{\geq 0})$ be a positive geometry as in Definition \ref{def:posgeomABL}. The \emph{adjoint locus} $A_{X_{\geq 0}} \subset X$ is the zero locus of the canonical form $\omega(X_{\geq 0})$. If $X = \mathbb{P}^d$, then the \emph{adjoint polynomial} of $X_{\geq 0}$ is the degree $\deg(Y)-d-1$ polynomial ${\rm adj}_{X_{\geq 0}} \in \mathbb{C}[y_0, \ldots, y_d]$ for which 
    \begin{equation} \label{eq:adjform} \omega(X_{\geq 0}) \, = \, \frac{{\rm adj}_{X_{\geq 0}}}{g(y)} \, \, \Big ( \sum_{i = 0}^d (-1)^i \, y_i \, {\rm d}y_0 \wedge \cdots \wedge \widehat{{\rm d}y_i} \wedge \cdots \wedge {\rm d}y_d \Big ),\end{equation}
    where $g(y)$ is a defining equation for the algebraic boundary $Y$ of $X_{\geq 0}$. 
\end{definition}
Let $X = \mathbb{P}^d$. By Definition \ref{def:adjoint} the adjoint locus $A_{X_{\geq 0}}$ is the zero locus of the adjoint polynomial ${\rm adj}_{X_{\geq 0}}$, which is defined up to scale. Hence, $A_{X_{\geq 0}}$ is either empty or a hypersurface. The form \eqref{eq:adjform} is a $d$-form on $\mathbb{C}^{d+1}$ which descends to a $d$-form on projective space. Moreover, all forms on projective space can be written in this way. The form between parentheses generates a cyclic $\mathbb{C}[y_0,\ldots,y_d]$-module whose coherent sheaf is the canonical sheaf of $\mathbb{P}^d$.

\begin{example}
    By Example \ref{ex:posorth}, $y^{-1} {{\rm d} y}$ is the canonical form of the closure $X_{\geq 0}$ of $\mathbb{R}_{>0}$ in $\mathbb{RP}^1 \subset X = \mathbb{P}^1$. Let $y_0, y_1$ be homogeneous coordinates on $X$ and $y = y_1/y_0$. We have 
    \[ \frac{{\rm d}y}{y} \, = \, \frac{y_0}{y_1} \, {\rm d} \Big ( \frac{y_1}{y_0} \Big ) \, = \, \frac{1}{y_0y_1} (y_0 \, {\rm d} y_1 - y_1 \, {\rm d} y_0 ).\]
    The degree of $Y$ is $2$, and the adjoint polynomial is the constant $1$. We have $A_{X_{\geq 0}} = \emptyset$.
\end{example}

\begin{proposition} \label{prop:univadjoint}
    Let $P \subset \mathbb{R}^d$ be $d$-dimensional and simple. Consider the polynomial 
    \begin{equation} \label{eq:univadjoint} {\rm Adj}_P(x) \, = \, \sum_{v \in {\cal V}(P)} |\det U_v| \cdot \prod_{\substack{F \in {\cal F}(P) \\ v \notin F}} x_F \end{equation}
    of degree $n-d$, where $n = |{\cal F}(P)|$. We have ${\rm adj}_P(y) = \frac{1}{y_0} \cdot {\rm Adj}_P(U \cdot y + z \, y_0)$.
\end{proposition}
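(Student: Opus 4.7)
The plan is to rewrite both sides of the claimed identity in a single affine chart, use the explicit formula of Theorem \ref{thm:canformpolytope}, and then lift back to homogeneous coordinates via a degree count.

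First, I would restrict to the chart $y_0 = 1$. Setting $y_0 = 1$ and ${\rm d}y_0 = 0$ in \eqref{eq:adjform}, only the $i = 0$ summand of $\sum_{i=0}^d (-1)^i y_i \, {\rm d}y_0 \wedge \cdots \wedge \widehat{{\rm d}y_i} \wedge \cdots \wedge {\rm d}y_d$ survives, collapsing to ${\rm d}y_1 \wedge \cdots \wedge {\rm d}y_d$. The defining equation $g(y)$ of the algebraic boundary $Y_P$ restricts to $\prod_{F \in \mathcal{F}(P)} (u_F \cdot y + z_F)$, so \eqref{eq:adjform} becomes $\omega(P) = \frac{{\rm adj}_P(1, y)}{\prod_F (u_F \cdot y + z_F)} \, {\rm d}y$ in this chart.

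Next, I would clear denominators in the formula from Theorem \ref{thm:canformpolytope}. Since $P$ is simple, each vertex lies in exactly $d$ facets, so bringing the sum over vertices to the common denominator $\prod_F (u_F \cdot y + z_F)$ multiplies the $v$-th summand by the $(n-d)$-fold product $\prod_{F \not\ni v} (u_F \cdot y + z_F)$. The numerator one obtains is exactly ${\rm Adj}_P$ from \eqref{eq:univadjoint} evaluated at $x_F = u_F \cdot y + z_F$. Comparing with the previous paragraph yields the affine identity ${\rm adj}_P(1, y) = {\rm Adj}_P(U \cdot y + z)$.

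Finally, I would homogenize. A direct count gives ${\rm Adj}_P$ homogeneous of degree $n - d$ in the $x_F$, while ${\rm adj}_P$ is homogeneous of degree $n - d - 1$ in $(y_0, \ldots, y_d)$. Hence both ${\rm Adj}_P(U \cdot y + z \, y_0)$ and $y_0 \cdot {\rm adj}_P(y_0, y)$ are homogeneous polynomials of degree $n - d$ in $(y_0, y)$, and they agree on the affine chart $y_0 = 1$ by the previous step. Since any homogeneous polynomial is determined by its restriction to $y_0 = 1$, these two polynomials coincide, giving the claim after dividing by $y_0$. The main subtlety — and essentially the only point to track — is that the prefactor $1/y_0$ is not removing a spurious factor but rather the correct homogenization adjustment between degrees $n - d$ and $n - d - 1$; divisibility of ${\rm Adj}_P(U \cdot y + z \, y_0)$ by $y_0$ then emerges as a consequence of the affine identity rather than as an input.
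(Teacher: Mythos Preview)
Your argument is correct and follows the same overall strategy as the paper's: identify the numerator of $\omega(P)$ in the affine chart $y_0=1$ using Theorem~\ref{thm:canformpolytope}, and then pass to homogeneous coordinates by a degree count. The one genuine difference is in how the extra factor of $y_0$ is handled. The paper treats the divisibility of ${\rm Adj}_P(U\cdot y + z\,y_0)$ by $y_0$ as an \emph{input}, citing the identity ${\rm Adj}_P(U\cdot y)=0$ from \cite[Theorem~3.10]{telen2025toric}, and only afterwards matches the quotient with ${\rm adj}_P$. You instead compare the two homogeneous degree-$(n-d)$ polynomials $y_0\cdot{\rm adj}_P(y_0,y)$ and ${\rm Adj}_P(U\cdot y+z\,y_0)$ directly via their common dehomogenization, so that divisibility by $y_0$ (equivalently, ${\rm Adj}_P(U\cdot y)=0$) falls out as a \emph{consequence}. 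Your route is therefore self-contained, while the paper's has the virtue of isolating ${\rm Adj}_P(U\cdot y)=0$ as a stand-alone fact.
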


\begin{proof}
    First, we note that $\frac{1}{y_0} \cdot {\rm Adj}_P(U \cdot y + z \, y_0)$ is indeed a polynomial: ${\rm Adj}_P(U \cdot y) = 0$ by \cite[Theorem 3.10]{telen2025toric}. Second, by Theorem \ref{thm:canformpolytope}, the divisor defined by ${\rm Adj}_P(U \cdot y + z \, y_0)$ in $\mathbb{P}^d$ restricts to that of ${\rm adj}_{P}$ on the affine chart where $y_0 \neq 0$. The degree of ${\rm adj}_{P}$ is $n-d-1$, while ${\rm Adj}_P(U \cdot y + z \, y_0)$ has degree $n-d$. This implies the proposition. 
\end{proof}

The polynomial ${\rm Adj}_P(x)$ from \eqref{eq:univadjoint} is called the \emph{universal adjoint} of $P$ in \cite{telen2025toric}. The polynomial ${\rm adj}_P$ appeared in Warren's work \cite{warren1996barycentric} on barycentric coordinates on $P$. It is sometimes called \emph{Warren's adjoint} for that reason. The universal adjoint only depends on the matrix $U$. It relates to Warren's adjoint like the toric amplitude relates to the canonical form. 

\begin{remark}
    See \cite[Theorem 3.2]{gaetz2025canonical} for a proof of the fact that, Warren's adjoint of a (not necessarily simple) polytope $P$ is the numerator of the dual volume function. 
\end{remark}

\begin{example} \label{ex:conicadjoint}
    The universal adjoint polynomial of the pentagon $P$ in Example \ref{ex:physics} is 
    \[ {\rm Adj}_P(x) \, = \, x_{24}x_{25}x_{35} + x_{13}x_{25}x_{35} + x_{13}x_{14}x_{35} + x_{13}x_{14}x_{24} + x_{14}x_{24}x_{25}. \]
    The polynomial ${\rm Adj}_P(x)$ defines a threefold in $\mathbb{P}^4$. That threefold is the zero locus of the amplitude ${\rm Amp}_P(x)$. It is known in algebraic geometry as the \emph{Segre cubic} \cite[Section 2.2]{telen2025toric}. Substituting $x_{13} = y_1 + y_0, x_{14} = y_2 + y_0, x_{24} = -y_1 + y_2 + y_0, \ldots$ in ${\rm Adj}_P$ yields 
    $y_0 (5\, y_0^2 - 3 \, y_0y_1 + 3 \, y_0y_2 - y_1y_2)$. The adjoint locus is a smooth conic in $X = \mathbb{P}^2$. It is the closure of the affine curve in $\mathbb{C}^2$ defined by the numerator $5-3y_1 + 3y_2-y_1y_2$ in Example~\ref{ex:pentagon1}. The real points of that curve are shown in the right part of Figure \ref{fig:dual+adjoint}. The curve interacts with the line arrangement $Y_P$ in an interesting way. We will now see that this is no coincidence. 
\end{example}

For a subset $S \subseteq {\cal F}(P)$, let $L_S =  \bigcap_{F \in S} H_F$. This linear space is called \emph{residual} if $L_S \cap P = \emptyset$. The union of all residual linear spaces is the \emph{residual arrangement} ${R}_P \subset Y_P$ of $P$. The facet hyperplane arrangement $Y_P = \bigcup_{F \in {\cal F}(P)} H_F$ is called \emph{simple} if no $d+1$ of the hyperplanes $H_F$ meet. In particular, if $Y_P$ is simple, then so is $P$ (but not vice~versa, see Exercise \ref{ex:simplecube}).

\begin{theorem}[{\cite[Theorem 1 and Proposition 2]{kohn2020projective}}] \label{thm:kohnranestad}
Let $P \subset \mathbb{R}^d$ be a $d$-dimensional polytope. The adjoint locus $A_{P}$ contains the residual arrangement $R_P$. If $Y_P$ is simple, then $A_P$ is the unique hypersurface of degree $n-d-1$ containing $R_P$. 
\end{theorem}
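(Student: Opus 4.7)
The plan is to prove the two claims in sequence. For the containment $R_P \subseteq A_P$, the cleanest approach for simple $P$ is a direct evaluation using the universal adjoint formula of Proposition~\ref{prop:univadjoint}: fix a residual flat $L_S = \bigcap_{F \in S} H_F$ with $L_S \cap P = \emptyset$, and at a point of $L_S$ the coordinate $x_F = u_F \cdot y + z_F y_0$ vanishes for every $F \in S$. In each summand $|\det U_v| \prod_{F' \not\ni v} x_{F'}$ of ${\rm Adj}_P$, the factor $x_F$ appears for every $F \in S$ with $v \notin F$, and this makes the summand vanish unless $v \in F$ for all $F \in S$, i.e.\ unless $v \in \bigcap_{F \in S} F$; but $\bigcap_{F \in S} F \subseteq L_S \cap P = \emptyset$, so no vertex qualifies and every summand vanishes. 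Hence ${\rm adj}_P|_{L_S} \equiv 0$. For a non-simple polytope the same conclusion can be reached either by triangulation via Remark~\ref{rem:subdiv} or, more conceptually, by iterating Poincar\'e residues: repeated application of Definition~\ref{def:posgeomABL} along an ordering of $S$ produces the canonical form of the empty face (which is zero), while the same iterated residue equals a nonzero rational multiple of ${\rm adj}_P|_{L_S}$.

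For uniqueness under the assumption that $Y_P$ is simple, I would induct on $d$. The base case $d = 1$ is immediate: $R_P = \emptyset$ and ${\rm adj}_P$ is a nonzero constant, unique up to scalar. For $d > 1$, let $A$ be any degree-$(n-d-1)$ hypersurface containing $R_P$, and restrict $A$ to each facet hyperplane $H_F$. A combinatorial check shows $R_F \subseteq R_P \cap H_F$: the facets of $F$ correspond to facets $F'$ of $P$ adjacent to $F$, and a subset $T$ of such $F'$ gives $\bigcap_{F' \in T}(H_F \cap H_{F'}) \cap F = \emptyset$ precisely when $L_{T \cup \{F\}}$ is residual for $P$. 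Since $Y_F$ is also simple and $F$ has $n - 1$ facets in dimension $d - 1$, ${\rm adj}_F$ has degree exactly $n - d - 1 = (n-1) - (d-1) - 1$, which matches $\deg A|_{H_F}$. By induction, $A|_{H_F} = c_F \cdot A_F$ for some $c_F \in \mathbb{C}$ (possibly zero).

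The main obstacle is matching the constants $c_F$ across facets. If $c_F = 0$ for some $F$, then $u_F \cdot y + z_F y_0$ divides $A$, and restricting to an adjacent facet $F'$ makes $A_{F'}$ divisible by the linear form cutting out the facet hyperplane $H_F \cap H_{F'}$ of $F'$; a zero of $A_{F'}$ on a full facet hyperplane of $F'$ would cancel the simple pole of $\omega(F')$ there, contradicting the residue axiom of Definition~\ref{def:posgeomABL}, unless $c_{F'} = 0$. Propagating through the connected adjacency graph of facets yields $c_F \equiv 0$, so $A$ is divisible by $g = \prod_F (u_F \cdot y + z_F y_0)$ of degree $n > \deg A$, forcing $A = 0$. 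Otherwise all $c_F$ are nonzero; writing $A_P|_{H_F} = \mu_F A_F$ for the explicit scalar $\mu_F$ obtained by comparing residues of $\omega(P)$ and $\omega(F)$ along $H_F$, the compatibility on codimension-two faces $G = F \cap F'$ forces $c_F/\mu_F = c_{F'}/\mu_{F'}$, and by connectedness of the facet adjacency graph this ratio takes a common value $c$. Then $A - c \cdot A_P$ vanishes on every $H_F$, is divisible by $g$, and has degree at most $n - d - 1 < n$, hence is zero, giving $A = c \cdot A_P$.
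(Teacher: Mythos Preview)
The paper does not prove this theorem; it is cited from Kohn--Ranestad, and the only in-text argument is the hint (Exercise~\ref{ex:residualarrang}) that the containment $R_P\cap\mathbb{R}^d\subseteq A_P$ for simple $P$ follows from Proposition~\ref{prop:univadjoint}. Your vanishing argument for ${\rm Adj}_P$ on a residual flat is exactly that intended argument, so the first half of your proposal (in the simple case) agrees with what little the paper offers.

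Your inductive uniqueness argument, however, has a genuine gap. You assert that a facet $F$ of $P$ has $n-1$ facets, so that $\deg {\rm adj}_F = (n-1)-(d-1)-1 = n-d-1$ matches $\deg(A|_{H_F})$. This is false: the facets of $F$ correspond only to the facets of $P$ \emph{adjacent} to $F$, and there may be far fewer than $n-1$ of these. For the cube ($d=3$, $n=6$) each square facet has $4$ edges, not $5$; so ${\rm adj}_F$ has degree $4-3=1$ while $A|_{H_F}$ has degree $6-3-1=2$. The induction hypothesis therefore cannot be applied directly to $A|_{H_F}$.

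The fix is to notice that $R_P\cap H_F$ contains more than $R_F$: for every facet $F'$ \emph{not} adjacent to $F$, the entire hyperplane $H_{F'}\cap H_F$ of $H_F$ is a residual flat $L_{\{F,F'\}}$ of $P$. Hence $A|_{H_F}$ is divisible by the product $\prod_{F'\nsim F}(u_{F'}\cdot y+z_{F'}y_0)|_{H_F}$ of these $n-1-m_F$ linear forms (where $m_F$ is the number of facets of $F$). After dividing, the quotient has degree $(n-d-1)-(n-1-m_F)=m_F-d$ and vanishes on $R_F$, so the induction hypothesis now applies and yields a scalar multiple of ${\rm adj}_F$. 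Your propagation of constants across adjacent facets then needs to track these extra linear factors as well; this is doable but more bookkeeping than your write-up suggests. The sketches for non-simple $P$ via triangulation or iterated residues are plausible but would also require care (e.g.\ ensuring the denominator in the iterated residue is nonzero at a generic point of $L_S$).
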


\begin{example} \label{ex:respentagon}
    The residual arrangement of the pentagon $P$ from Example \ref{ex:pentagon1} consists of five points, two of which lie ``at infinity'' in $\mathbb{P}^2$. The three finite residual points are visible in Figure \ref{fig:dual+adjoint} (right). These five points are contained in a unique conic, which is the one we identified in Example \ref{ex:conicadjoint}. To gain some intuition about the fact that $R_P \subset A_P$, let us compute the residue of the form $\omega(P)$ in Example \ref{ex:pentagon1} along the line $y_2 = -1$:
    \[ {\rm Res}_{y_2 = -1} \, \omega(P) \, = \, \frac{2 - 2y_1}{(y_1+1)(-y_1)(-y_1+1)2} \, {\rm d}y_1. \]
    This must equal the canonical form of $[-1,0]$. The vanishing of the numerator at the residual point $(1,-1)$ makes sure that the spurious pole at $y_1 = 1$ cancels. 
\end{example}

\begin{remark} An analogous notion of \emph{residual arrangement} $R_{X_{\geq 0}}$ for general positive geometries $(X,X_{\geq 0})$ is proposed in \cite[Section 4.4]{lam2024invitation}. Under certain smoothness assumptions on the boundary strata, \cite[Proposition 1]{lam2024invitation} shows that $R_{X_{\geq 0}} \subseteq A_{X_{\geq 0}}$. 
\end{remark}

If $P$ is simple, the inclusion $R_P \subseteq A_P$ can be seen from Proposition \ref{prop:univadjoint}, see Exercise \ref{ex:residualarrang}.

\section{Positive geometry of polypols} \label{sec:3}

This section is about positive geometries in the plane, i.e., $X = \mathbb{P}^2$. It is mostly inspired by the paper \cite{kohn2025adjoints}. While in the previous section we worked in $d$ dimensions but allowed only linear boundary components, here we limit ourselves to $2$ dimensions but consider more curvy objects, such as the ``ear-shaped'' positive geometry in the right part of Figure \ref{fig:firstexamples}.  

\begin{definition} \label{def:polypol}
    A \emph{polypol} $(Y_\bullet, v_\bullet)$ consists of a tuple $Y_\bullet$ of $r \geq 2$ distinct irreducible plane curves $Y_1, \ldots, Y_r \subset \mathbb{P}^2$ and a tuple $v_\bullet$ of $r$ distinct points $v_{12}, v_{23}, \cdots, v_{r-1,r}, v_{r1} \in \mathbb{P}^2$ such that $v_{ij} \in (Y_i \cap Y_j) \setminus (\bigcup_{k \notin \{i,j\}} Y_k)$, $v_{ij}$ is smooth on $Y_i$ and $Y_j$ and $Y_i$ intersects $Y_j$ transversally at $v_{ij}$. The polypol $(Y_\bullet, v_\bullet)$ is \emph{rational} if $Y_1, \ldots, Y_r$ are rational plane curves.
\end{definition}

\begin{example} Recall that a curve $Y_i \subset \mathbb{P}^2$ is \emph{rational} if there exists a birational map $\mathbb{P}^1 \rightarrow Y_i$. Examples are lines, conics and singular plane cubics.
\end{example}

Notice that Definition \ref{def:polypol} is purely ``complex'': there are no assumptions on the real points of $Y_\bullet$ and $v_\bullet$. The curves $Y_i$ are called \emph{boundary curves} and the $v_{ij}$ are called \emph{vertices}. Let $Y = \bigcup_{i = 1}^r Y_i$ be the union of all boundary curves. The next proposition indicates that rational polypols provide fertile ground for finding planar positive geometries. 

\begin{proposition} \label{prop:genuszero}
Let $Y = Y_1 \cup \cdots \cup Y_r$, where $Y_i \subset \mathbb{P}^2$ are rational curves. The pair $(\mathbb{P}^2, Y)$ has genus zero in the sense of \cite{brown2025positive}.     
\end{proposition}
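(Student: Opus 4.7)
The plan is to put a mixed Hodge structure on $H_2(\mathbb{P}^2, Y)$ via the long exact sequence of the pair, and then reduce the genus-zero condition to a Hodge-theoretic statement about $H_1(Y)$. Since $H_1(\mathbb{P}^2) = 0$, the relevant portion of the sequence reads
\[ H_2(Y) \longrightarrow H_2(\mathbb{P}^2) \longrightarrow H_2(\mathbb{P}^2, Y) \longrightarrow H_1(Y) \longrightarrow 0, \]
and I would read this as a sequence of mixed Hodge structures. Consequently every Hodge type appearing in $H_2(\mathbb{P}^2, Y)$ already occurs either in $H_2(\mathbb{P}^2)$ or in $H_1(Y)$.

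The ambient contribution is immediate: $H_2(\mathbb{P}^2)$ is pure of Hodge type $(-1,-1)$, so it contributes no class of type $(-p, 0)$ with $p>0$. The main step is therefore to show that $H_1(Y)$ is pure of type $(0,0)$. I would argue this by normalization. Let $\pi \colon \widetilde{Y} \to Y$ be the normalization map, so that $\widetilde{Y} = \bigsqcup_i \widetilde{Y}_i$. Rationality of each $Y_i$ gives $\widetilde{Y}_i \cong \mathbb{P}^1$, hence $H^1(\widetilde{Y}) = 0$. The normalization short exact sequence
\[ 0 \longrightarrow \mathbb{Q}_Y \longrightarrow \pi_* \mathbb{Q}_{\widetilde{Y}} \longrightarrow \mathcal{Q} \longrightarrow 0, \]
in which $\mathcal{Q}$ is a skyscraper supported on the singular locus of $Y$ (the vertices $v_{ij}$ together with any self-intersections of the individual rational curves), unwinds to the four-term sequence
\[ 0 \to H^0(Y) \to H^0(\widetilde{Y}) \to H^0(\mathcal{Q}) \to H^1(Y) \to 0, \]
exhibiting $H^1(Y)$ as a quotient of a pure weight-zero $\mathbb{Q}$-vector space. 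Dualizing places $H_1(Y)$ in pure Hodge type $(0,0)$.

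Combining the two inputs, the mixed Hodge structure on $H_2(\mathbb{P}^2, Y)$ has its graded pieces concentrated in the two types $(-1,-1)$ and $(0,0)$. In particular $h^{-p,0}$ vanishes for every $p > 0$, which is precisely the genus-zero condition. The hard part will be justifying that the normalization sequence really is a sequence of mixed Hodge structures, so that ``quotient of a weight-zero space'' upgrades to ``pure of weight zero'' and not merely ``weights $\leq 0$.'' This is standard for Deligne's MHS, but if one wants to avoid the sheaf-theoretic gymnastics, one can instead invoke the simplicial/cubical resolution tools from \cite[Sections 3.3.2--3.3.3]{brown2025positive}, which compute $\mathrm{gr}^W_\bullet H^1(Y)$ directly from the stratification of $Y$ and reduce the whole statement to the vanishing $H^1(\mathbb{P}^1) = 0$.
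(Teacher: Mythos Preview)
Your argument is correct: the long exact sequence of the pair, strictness of morphisms of mixed Hodge structures, and the normalization computation of $H^1(Y)$ together pin down the Hodge types of $H_2(\mathbb{P}^2,Y)$ as $(-1,-1)$ and $(0,0)$, which is exactly the genus-zero condition. Two minor remarks: the skyscraper $\mathcal{Q}$ is supported only where $\pi$ has more than one preimage (so nodes and crossing points, not cusps), and the reference to ``vertices $v_{ij}$'' is out of place here since the proposition concerns an arbitrary union of rational curves, not a polypol; neither affects the argument.

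The paper takes a different route: it simply invokes Propositions~3.21 and~3.22 of \cite{brown2025positive}, which package the genus-zero criterion for $(\mathbb{P}^n,Y)$ in terms of the components of $Y$, so the proof is a one-line citation. Your approach is more self-contained and makes the Hodge theory explicit, at the cost of having to justify that the normalization and pair sequences respect mixed Hodge structures (which, as you note, is standard Deligne). The paper's approach is shorter and defers all the work to the reference; yours would be preferable in a context where the reader is not assumed to have \cite{brown2025positive} at hand. Your closing remark, pointing to the simplicial tools in \cite[Sections~3.3.2--3.3.3]{brown2025positive}, is in fact very close to what the cited propositions do.
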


\begin{proof}
    This follows from Propositions 3.21 and 3.22 in \cite{brown2025positive}. 
\end{proof}

The vertices $v_{ij}$ are among the singular points of $Y$. We define the \emph{residual arrangement} $R_{\cal P}$ of the polypol ${\cal P} = (Y_\bullet, v_\bullet)$ as the set of points ${\rm Sing}(Y) \setminus \{v_{12},v_{23}, \ldots, v_{r1}\}$. We say that ${\cal P}$ is \emph{nodal} if $Y$ only has nodal singularities.

\begin{definition} \label{def:adjointpolypol}
    An \emph{adjoint curve} $A_{\cal P} \subset \mathbb{P}^2$ of a nodal polypol ${\cal P} = (Y_\bullet, v_\bullet)$ is a curve of degree $n-3$ containing the residual arrangement $R_{\cal P}$, where $n = \deg(Y_1) + \cdots + \deg(Y_r)$.
\end{definition}

In Exercise \ref{ex:adjointexists} you will show that an adjoint curve $A_{\cal P}$ always exists for a nodal polypol. Of course, Definition \ref{def:adjointpolypol} is modeled after Definition \ref{def:adjoint} and Theorem \ref{thm:kohnranestad}. The correct definition of adjoints for non-nodal polypols is more technical, see \cite[Definition 2.1]{kohn2025adjoints}.

\begin{example} \label{ex:adjpizza}
    Let $Y_1, Y_2, Y_3$ be as in Example \ref{ex:pizza} and let $v_{12} = (0,0)$, $v_{23} = (0,1)$, $v_{13} = (1,0)$ in the local coordinates $(x,y)$ used in that example. These data form a nodal polypol ${\cal P} = (Y_\bullet, v_\bullet)$. The residual arrangement consists of two points: $R_{\cal P} = \{ (-1,0), (0,-1) \}$. We have $n = 4$. There is a unique adjoint line $A_{\cal P}$. It is locally given by $1 + x + y = 0$.
\end{example}

In the nodal rational case, it makes sense to speak of \emph{the} adjoint curve of a polypol ${\cal P}$: 

\begin{theorem}[{\cite[Theorem 2.1]{kohn2025adjoints}}] \label{thm:uniqueadjoint}
    A nodal rational polypol ${\cal P} = (Y_\bullet, v_\bullet)$ has precisely one adjoint curve. That curve does not contain any of the boundary curves $Y_i$, and it does not contain any of the vertices $v_{ij}$. 
\end{theorem}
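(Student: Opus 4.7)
The plan is to interpret each candidate adjoint $A$ of degree $n-3$ through the rational two-form $\omega_A = A \cdot \Omega_{\mathbb{P}^2}/(F_1 \cdots F_r)$, where $F_i$ defines $Y_i$ and $\Omega_{\mathbb{P}^2}$ is the canonical two-form on $\mathbb{P}^2$, and then exploit the residue theorem on each rational component $Y_i \cong \mathbb{P}^1$.

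First I would verify existence by a dimension count. Since $Y_i$ is rational of degree $d_i$, it has exactly $\binom{d_i-1}{2}$ self-nodes (the node count needed for geometric genus zero), while Bezout contributes $d_id_j$ pairwise intersections. Hence $Y$ has $\sum_i \binom{d_i-1}{2} + \sum_{i<j} d_id_j = \binom{n-1}{2} + r - 1$ nodes in total, where $n = \sum_i d_i$, so after removing the $r$ vertices we obtain $|R_{\mathcal{P}}| = \binom{n-1}{2} - 1$. Because $\dim H^0(\mathbb{P}^2, \mathcal{O}(n-3)) = \binom{n-1}{2}$, the linear system of adjoint curves has vector-space dimension at least one, and an adjoint exists.

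For uniqueness and the two non-containment statements I would analyse the residues $\eta_i := \mathrm{Res}_{Y_i}\omega_A$ on the normalizations $\pi_i : \mathbb{P}^1 \to Y_i$. A local computation at each self-node of $Y_i$ and at each residual intersection $Y_i \cap Y_j$ shows that the vanishing of $A$ on $R_{\mathcal{P}}$ is exactly what cancels the potential poles of $\pi_i^*\eta_i$ there, so $\pi_i^*\eta_i$ is a rational one-form on $\mathbb{P}^1$ whose only poles are simple ones at the two vertex preimages $\pi_i^{-1}(v_{i,i\pm 1})$. By the residue theorem on $\mathbb{P}^1$ such a one-form is determined by a single scalar $c_i$ (its residue at $\pi_i^{-1}(v_{i,i+1})$, with the opposite residue at the other vertex). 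Comparing the iterated double residue of $\omega_A$ at the vertex $v_{ij}$ computed via $Y_i$ and via $Y_j$ identifies $c_i$ with $\pm c_j$; fixing cyclically consistent orientations on the boundary yields $c_i = c_j$ for adjacent pairs, and cyclicity of the polypol forces a common value $c_1 = \cdots = c_r = c$.

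All three conclusions then fall out. If $c = 0$ then every $\eta_i$ vanishes, so $\omega_A$ has no poles on $Y$, forcing $F_1 \cdots F_r \mid A$; but $\deg A = n-3 < n$, so $A = 0$. For a nonzero adjoint $A$, therefore, the tuple $(\eta_1, \ldots, \eta_r)$ and hence $A$ itself are determined by the single scalar $c$, proving uniqueness. If $Y_i \subset A$ then $\omega_A$ has no pole along $Y_i$, so $c_i = 0$ and consequently $A = 0$; if $A$ vanishes at a vertex $v_{ij}$ then the residue of $\pi_i^*\eta_i$ at $\pi_i^{-1}(v_{ij})$ is zero, so $\pi_i^*\eta_i$ has at most one simple pole on $\mathbb{P}^1$ and is therefore identically zero by the residue theorem, again forcing $A = 0$. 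The most delicate point is the sign bookkeeping that guarantees $c_i = +c_j$ rather than $-c_j$ around the cycle: this is a standard but subtle property of the Poincar\'e--Leray iterated residue and is settled by fixing the orientations on the boundary strata consistently with the cyclic ordering of the polypol.
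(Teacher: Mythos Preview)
The paper does not supply its own proof of this theorem; it is quoted from \cite[Theorem~2.1]{kohn2025adjoints}, with existence deferred to Exercise~\ref{ex:adjointexists}. Your argument is correct and is essentially the one given in that reference. Moreover, the residue calculation you outline---reducing $\pi_i^*\eta_i$ to a form on $\mathbb{P}^1$ with only two simple poles---is exactly the computation the paper carries out in its proof of Theorem~\ref{thm:canformpolypol} (which \emph{uses} the present theorem as input). So your approach is fully aligned with the surrounding material.

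Two minor comments. First, the sign bookkeeping you flag as ``most delicate'' is actually irrelevant for uniqueness: regardless of whether the cyclic relation reads $c_i = c_{i+1}$ or $c_i = -c_{i+1}$, the linear map $A \mapsto c_1$ has trivial kernel by your own argument, so the adjoint space is at most one-dimensional. (In fact the signs are all $+1$: the residue of $\pi_i^*\eta_i$ at $v_{i-1,i}$ is $-c_i$ by the residue theorem on $\mathbb{P}^1$, and anticommutativity of the double residue at $v_{i,i+1}$ then gives $c_i = c_{i+1}$ directly.) Second, the implication ``$Y_i \subset A \Rightarrow c_i = 0 \Rightarrow A = 0$'' silently invokes the relation $c_i = c_{i+1}$ in a situation where $\eta_i$ need not have the two-pole structure. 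The clean fix is to \emph{define} $c_i := \mathrm{Res}_{v_{i,i+1}}\mathrm{Res}_{Y_i}\omega_A$, which makes sense for every adjoint (and vanishes when $Y_i \subset A$); then the identity $c_i = c_{i+1}$ follows from anticommutativity of iterated residues together with the residue theorem on $\tilde Y_{i+1}$, uniformly in all cases. This is routine and does not affect your strategy.
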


We note that, with the correct more general definition of adjoint \cite[Definition 2.1]{kohn2025adjoints}, the assumption that $P$ is \emph{nodal} can be omitted \cite[Theorem 2.1]{kohn2025adjoints}. The adjective \emph{rational} is essential in our discussion. Indeed, higher genus curves have nonzero holomorphic one-forms, which is an obstruction to uniqueness of the canonical forms on the boundary curves. In the spirit of Proposition \ref{prop:genuszero}, components of positive genus would break the genus-zero property. 

\begin{example}
    Let $Y_1$ be a smooth elliptic curve and $Y_2$ a generic line. Let $v_{12}, v_{21}$ be any of the three intersection points in $Y_1 \cap Y_2$. The remaining intersection point $p$ is the unique point in the residual arrangement $R_{\cal P}$ of ${\cal P} = (Y_\bullet, v_\bullet)$. Each line in the pencil of lines passing through $p$ is an adjoint curve of ${\cal P}$. If we replace $Y_1$ by a nodal cubic, then the residual arrangement consists of the point $p$ and the node $q$. These define a unique adjoint line $A_{\cal P}$. \end{example}

\begin{remark}
    Rational polypols and their adjoints were first introduced by Wachspress in \cite{wachspress1975rational} with the aim of using them in finite element methods for solving differential equations. The special case of polygons inspired Warren's work \cite{warren1996barycentric} on adjoints of $d$-dimensional polytopes.  
\end{remark}

The importance of Theorem \ref{thm:uniqueadjoint} in constructing positive geometries in the sense of Definition \ref{def:posgeomABL} from polypols is explained intuitively as follows. We will soon associate a semi-algebraic set $P$ to ${\cal P}$ whose algebraic boundary is $Y$. The poles of the canonical form $\omega(P)$ must lie along the $Y_i$, and the zeros of $\omega(P)$ should contain the residual arrangement in order to cancel spurious poles of the residues. This was illustrated in Example \ref{ex:respentagon}. If ``containing $R_{\cal P}$'' uniquely determines the zero locus of $\omega(P)$, then one has a unique candidate for $\omega(P)$ (up to scaling). In order to show that $(\mathbb{P}^2, X_{\geq 0})$ is a positive geometry in the sense of Definition \ref{def:posgeomABL}, we need only check the residue conditions for this candidate. 

\begin{definition} \label{def:quasireg}
    A \emph{quasi-regular polypol} is a polypol ${\cal P} = (Y_\bullet, v_\bullet)$ whose boundary curves $Y_1, \ldots, Y_r$ are real (i.e., $Y_i(\mathbb{R})$ is Zariski dense in $Y_i$), and whose vertices $v_{i-1,i} \in Y_{i-1} \cap Y_i$ are real, equipped with the following additional data: 
    \begin{enumerate}
        \item $r$ curve segments $(Y_i)_{\geq 0} \subset Y_i(\mathbb{R})$ connecting $v_{i-1,i}$ to $v_{i,i+1}$, 
        \item a semi-algebraic set $P \subset \mathbb{RP}^2$,
    \end{enumerate}
    such that $(Y_i)_{\geq 0}$ is contained in the smooth points of $Y_i$, each connected component of the interior of $P$ is simply connected, and $\partial P = \bigcup_{i=1}^r (Y_i)_{\geq 0}$.
\end{definition}

\begin{example}
    Figure \ref{fig:firstexamples} (right) shows a quasi-regular polypol with $r = 3$ and $n = 4$. The set $P$ is $X_{\geq 0}$. Figure \ref{fig:cuspexample} is \emph{not} a quasi-regular polypol, because the vertex $b$ lies in the singular locus of $Y_2$. Figure \ref{fig:nodeexample} (bottom) does \emph{not} represent a polypol, because $r = 1 < 2$. Every convex polygon $P \subset \mathbb{R}^2 \subset \mathbb{P}^2$ represents a quasi-regular polypol with $Y_\bullet = (H_F \, : \, F \in {\cal F}(P))$ given by the edge lines of $P$ and $v_\bullet = (v \, : \, v \in {\cal V}(P))$ consists of the vertices of $P$. 
\end{example}

\begin{example}
    Figure \ref{fig:quasireg} shows a (cartoon of a) quasi-regular rational polypol with $r = 4$, $n = 8$. The semi-algebraic set $P$ is shaded in green, and $\partial P$ is the bold blue curve. Notice that the curve segments $(Y_i)_{\geq 0}$ may contain singular points of $Y$, but not of $Y_i$.  
    \begin{figure}
        \centering
        \includegraphics[width=0.5\linewidth]{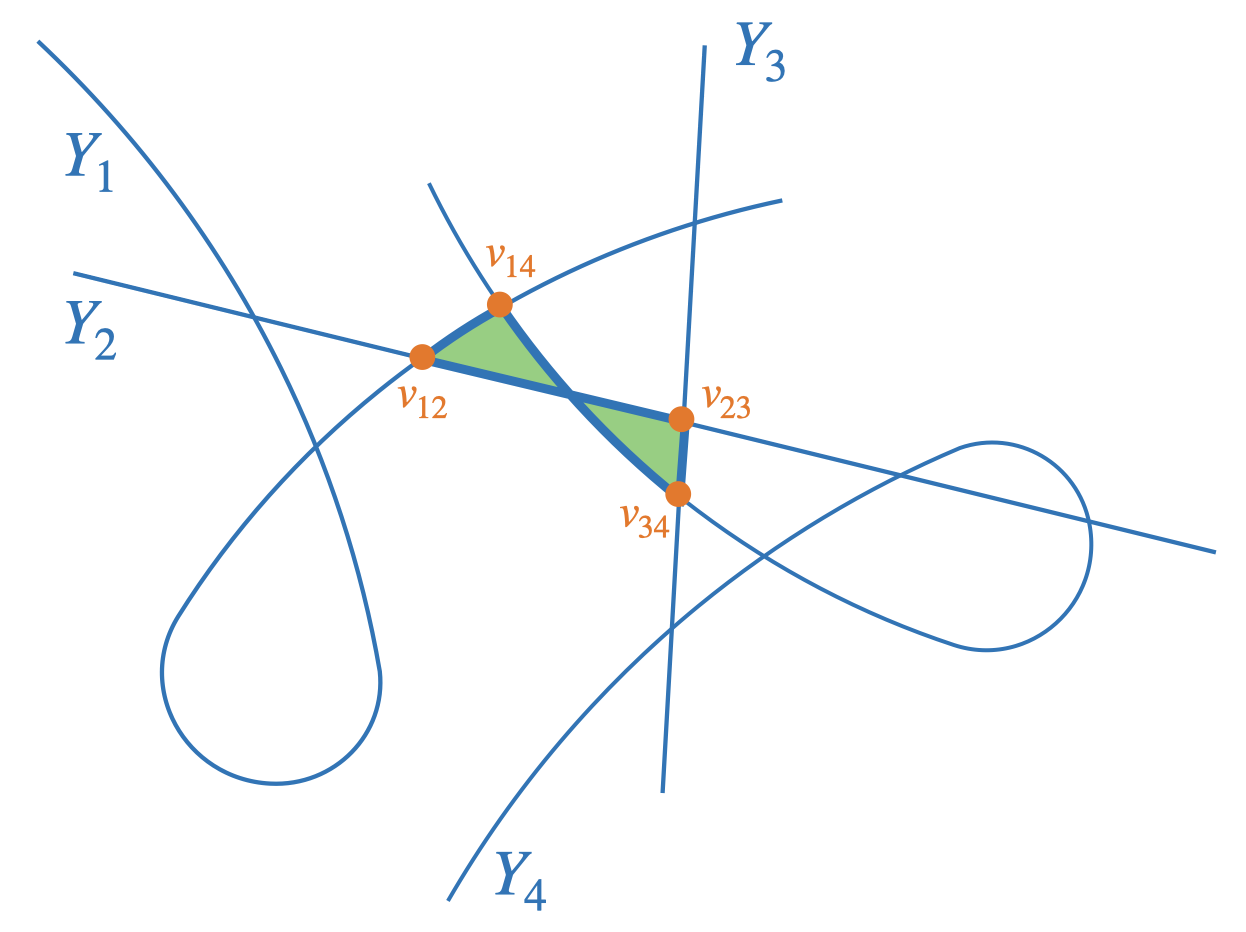}
        \caption{A quasiregular polypol with $r = 4$ and $n = 8$.}
        \label{fig:quasireg}
    \end{figure}
\end{example}

To state the next theorem, for a nodal quasi-regular rational polypol ${\cal P} = (Y_\bullet, v_\bullet)$ with semi-algebraic set $P$ we write ${\rm adj}_{\cal P}(x,y,z) \in \mathbb{C}[x,y,z]_{n-3}$ for a defining equation of $A_{\cal P}$ (Definition \ref{def:adjointpolypol}), and $f_i \in \mathbb{C}[x,y,z]_{{\rm deg}(Y_i)}$ for a defining equation of $Y_i$.

\begin{theorem}[{\cite[Theorem 2.15]{kohn2025adjoints}}] \label{thm:canformpolypol}
    A quasi-regular rational polypol ${\cal P} = (Y_\bullet, v_\bullet)$ defines a positive geometry $(\mathbb{P}^2, P)$ in the sense of Definition \ref{def:posgeomABL}. Here $P$ is the semi-algebraic subset from Definition \ref{def:quasireg}. If ${\cal P}$ is nodal, then its canonical form is, up to a constant $\alpha$, given by 
    \begin{equation} \label{eq:omegapolypol} \omega(P) \, = \, \alpha \cdot \frac{{\rm adj}_{\cal P}}{f_1 \cdot f_2 \cdot \cdots \cdot f_r} \, (x \, {\rm d}y \wedge {\rm d}z - y \, {\rm d}x \wedge {\rm d}z + z \, {\rm d}x \wedge {\rm d} y). \end{equation}
\end{theorem}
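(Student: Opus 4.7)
The plan is to exhibit the explicit form $\omega$ in \eqref{eq:omegapolypol} as a candidate, verify the recursive axioms of Definition~\ref{def:posgeomABL}, and deduce uniqueness from Theorem~\ref{thm:uniqueadjoint}. A degree count confirms $\omega$ is a well-defined rational $2$-form on $\mathbb{P}^2$: the numerator has degree $(n-3)+3 = n$, matching the denominator, and the factor in parentheses generates the canonical sheaf of $\mathbb{P}^2$ as in Definition~\ref{def:adjoint}. By Theorem~\ref{thm:uniqueadjoint}, ${\rm adj}_{\mathcal{P}}$ shares no component with any $Y_i$, so $\omega$ has simple poles exactly along $Y_1, \ldots, Y_r$ and no others on $\mathbb{P}^2$.

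The heart of the proof is to check, for each $i$, that ${\rm Res}_{Y_i}\omega$ equals the canonical form of $(Y_i)_{\geq 0}$ as a positive geometry in $Y_i$. Since $Y_i$ is rational, I would fix a normalization $\pi_i \colon \mathbb{P}^1 \to Y_i$ and let $[a_i, b_i] \subset \mathbb{RP}^1$ be the interval mapped to $(Y_i)_{\geq 0}$ with $\pi_i(a_i) = v_{i-1,i}$ and $\pi_i(b_i) = v_{i,i+1}$. Computing $\eta_i := \pi_i^* {\rm Res}_{Y_i}\omega$ as a rational $1$-form on $\mathbb{P}^1$, one wants to match $\eta_i$ with the canonical form of $[a_i, b_i]$ from Example~\ref{ex:linesegments} after a global rescaling by $\alpha$; this reduces everything to showing that $\eta_i$ has only two simple poles on $\mathbb{P}^1$, located at $a_i$ and $b_i$, with residues $\pm 1$.

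The a priori candidates for poles of $\eta_i$ are the preimages of (i) the two endpoint vertices, (ii) the other intersections $Y_i \cap Y_j$ with $j \neq i$, and (iii) the nodes of $Y_i$ itself, which are resolved by $\pi_i$. Points of types (ii) and (iii) that are not endpoint vertices lie in the residual arrangement $R_{\mathcal{P}}$, where ${\rm adj}_{\mathcal{P}}$ vanishes by Definition~\ref{def:adjointpolypol}. A local calculation using transversality of each node, together with the standard residue formula for a form $\frac{g}{f_j f_\ell}\, {\rm d}x \wedge {\rm d}y$ at a transverse intersection, shows that the simple zero of ${\rm adj}_{\mathcal{P}}$ exactly cancels the spurious pole introduced by the vanishing of $f_\ell$ on $Y_i$. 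At each endpoint vertex $v_{i-1,i}$ and $v_{i,i+1}$, Theorem~\ref{thm:uniqueadjoint} guarantees that ${\rm adj}_{\mathcal{P}}$ is nonzero, and transversality of $Y_i$ with $Y_{i\pm 1}$ together with the Jacobian form of the residue gives $\eta_i$ a simple pole with nonzero residue. A single choice of $\alpha$ simultaneously normalizes all these residues to $\pm 1$: the compatibility at a shared vertex $v_{i-1,i}$ of $Y_{i-1}$ and $Y_i$ is automatic, because the two iterated residues along $(Y_{i-1}, v_{i-1,i})$ and $(Y_i, v_{i-1,i})$ of the same 2-form $\omega$ differ only by the prescribed orientation sign.

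The main obstacle is the local cancellation at the residual arrangement, especially when $Y_i$ itself is singular and $\pi_i$ is not an isomorphism near a node. One must track carefully that the vanishing of ${\rm adj}_{\mathcal{P}}$ is of the correct order and direction to remove the spurious pole after restriction to $Y_i$ and pullback by $\pi_i$; for this, nodality of $\mathcal{P}$ is used so that all local pictures are transverse double points, keeping the adjoint-vanishing of order one sufficient. Once the residue axiom is verified, the recursive Definition~\ref{def:posgeomABL} is satisfied: the induced positive geometry on each $(Y_i, (Y_i)_{\geq 0})$ is, after normalization, an interval in $\mathbb{P}^1$ as in Example~\ref{ex:linesegments}, whose recursive axiom in turn bottoms out at the endpoint vertices. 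Uniqueness of $\omega(P)$ follows because any other form with the required pole structure and residues would, by the defining formula \eqref{eq:adjform}, produce another degree-$(n-3)$ plane curve through $R_{\mathcal{P}}$, which must coincide with $A_{\mathcal{P}}$ up to scale by Theorem~\ref{thm:uniqueadjoint}.
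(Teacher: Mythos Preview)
Your approach matches the paper's: normalize each $Y_i$ by $\pi_i:\mathbb{P}^1\to Y_i$, pull back the residue, show it becomes the canonical form of an interval, and use iterated residues at the shared vertices to see that a single scalar $\alpha$ works for all $i$. The execution differs in a way that matters precisely at the step you flag as the main obstacle. Rather than arguing pole-by-pole that each spurious pole is cancelled, the paper writes the parametrization explicitly as $\pi_i(t)=(r(t)/h(t),\,s(t)/h(t))$ and shows that, after clearing denominators, the pulled-back residue has the form $p(t)/\bigl(q(t)(t-a_i)(b_i-t)\bigr)\,{\rm d}t$ where $p$ and $q$ both have degree $(n-1)n_i-2$ and have \emph{the same list of roots}: preimages of nodes of $Y_i$, preimages of the non-vertex points of $Y_i\cap Y_j$, and the points where $Y_i$ has vertical tangent. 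This global root count handles in one stroke the case your local phrasing ``the spurious pole introduced by the vanishing of $f_\ell$ on $Y_i$'' does not cover: at a node of $Y_i$ there is no $f_\ell$ vanishing, and the spurious pole instead comes from the factor $\partial f_i/\partial y$ in the Poincar\'e residue (equivalently, from the conductor in adjunction for the normalization). Your sketch correctly identifies this as the delicate point but does not say what actually cancels it; the paper's bookkeeping shows that both $p$ and $q$ pick up a simple zero at each of the two preimages of such a node, so the cancellation is exact.
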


\begin{proof}
    We shall sketch the proof and refer to \cite[Theorem 2.15]{kohn2025adjoints} for details. We work in a ``sufficiently generic'' affine chart $\mathbb{C}^2 \subset \mathbb{P}^2$ which intersects each component of $A_P \cup Y$. The form $\omega(P)$ in \eqref{eq:omegapolypol} is given, up to the factor $\alpha$ and in local coordinates $(x,y)$, by 
    \[ \frac{{\rm adj}_{\cal P}(x,y)}{f_1(x,y) \cdot f_2(x,y) \cdot \cdots \cdot f_r(x,y)} \, {\rm d}x \wedge {\rm d} y.\]
    Here we abuse notation slightly by using ${\rm adj}_{\cal P}$ for a local defining equation of $A_{\cal P}$ and $f_i$ for a local equation of $Y_i$. The residue along $Y_i$ is the restriction of the following one-form to $Y_i$: 
    \[ \eta \, = \, \frac{{\rm adj}_{\cal P}}{f_1 \cdot \cdots \cdot \hat{f}_i \cdot \cdots \cdot f_r \cdot \frac{\partial f_i}{\partial y}} \, {\rm d} x. \]
    Genericity of our chart ensures that the derivative of $f_i$ with respect to $y$ is not identically zero. In our local coordinates, the normalization map $\phi_i: \mathbb{P}^1 \rightarrow Y_i$ is given by $t \mapsto (r(t)/h(t), \, s(t)/h(t))$, where $r, s, h$ are polynomials of degree $\deg(Y_i)$. The pre-image of the smooth curve segment $(Y_i)_{\geq 0}$ is an interval $[a_i, b_i] \subset \mathbb{RP}^1$, where $\phi_i(a_i) = v_{i-1,i}$ and $\phi_i(b_i) = v_{i,i+1}$. Since $\phi_i$ is an isomorphism on an open neighborhood of $\phi^{-1}((Y_i)_{\geq 0})$, we may check the residue conditions of Definition \ref{def:posgeomABL} on the pullback $\phi_i^*(\eta_{|Y_i})$. This is given by 
    \[ \phi_i^*(\eta_{|Y_i}) \, = \, \frac{{\rm adj}_{\cal P}(\phi_i(t)) \cdot (r'(t)h(t) - h'(t)r(t))}{h(t)^2 \cdot \frac{\partial f_i}{\partial y}(\phi_i(t)) \cdot \prod_{j \in \{1,\ldots, r\} \setminus \{i\}} f_j(\phi_i(t))} \, {\rm d} t\]
    When expanding this, both the numerator and denominator have a factor $h(t)^{n-1}$, where $n= \sum_{i=1}^r \deg(Y_i)$. Moreover, since $v_{i-1,i} \in Y_{i-1}$ and $v_{i,i+1} \in Y_{i+1}$, the denominator has a factor $(t-a_i)(t-b_i)$. We obtain the following simplified form: 
    \[ \phi_i^*(\eta_{|Y_i}) \, = \, \frac{p(t)}{q(t) (t-a_i)(b_i-t)} \, {\rm d}t,\]
    where $p$ and $q$ are nonzero polynomials of degree $(n-1)n_i-2$, where $n_i = \deg(Y_i)$. The fact that ${\rm adj}_{\cal P}$ does not vanish on $Y_i$ follows from Theorem \ref{thm:uniqueadjoint}. We claim that $p$ and $q$ have the same roots. By definition, the factor $h(t)^{n-3} {\rm adj}_{\cal P}(\phi_i(t))$ of $p$ vanishes at the pre-images of the nodes of $Y_i$ under $\phi_i$, and at the pre-images of the intersection points $Y_i \cap Y_j$, excluding $v_{i-1,i}, v_{i,i+1}$. This gives a list of $(n_i-1)(n_i-2) + \sum_{j\in \{1, \ldots, r\} \setminus \{ i \}} n_i n_j -2 = n_i(n-3)$ roots. Additionally, the factor $(r'(t)h(t)-h'(t)r(t))$ vanishes when the tangent line of $Y_i$ at a smooth point $\phi_i(t)$ is vertical, i.e., parallel to the $y$-axis. There are $2n_i-2$ such $t$-values, so we have found all $(n-1)n_i-2$ roots. As desired, $q(t)$ must also vanish at the intersection points $Y_i \cap Y_j$, excluding $v_{i-1,i}, v_{i,i+1}$, and its factor $h(t)^{n_i-1} \frac{\partial f_i}{\partial y}(\phi_i(t))$ vanishes at the nodes of $Y_i$ and the points with vertical tangent line. We conclude that there is a constant $\beta_i \neq 0$ such that 
    \begin{equation} \label{eq:residues} \phi_i^*(\eta_{|Y_i}) \, = \, \frac{\beta_i}{(t-a_i)(b_i-t)} \, {\rm d}t.\end{equation}
    The residues at $a_i, b_i$ are $\pm \gamma_i = \pm \beta_i (b_i - a_i)^{-1}$. We claim that the nonzero constant $\gamma_i$ is independent of $i$, so that scaling $\omega(P)$ by $\gamma_i^{-1}$ gives the desired residues $\pm 1$. For this, notice that the iterated residue at the vertex $v_{i,i+1}$ can be taken in two ways: either we first take the residue along $Y_i$, and then along $Y_{i+1}$, or we start with $Y_{i+1}$. This gives the same answer up to sign. One can check this locally by calculating the iterated residue at $(0,0)$ of the form $\frac{1}{xy} {\rm d} x \wedge {\rm d}y$. This way we see that $\gamma_i = \gamma_{i+1}$, and we continue cyclically around the polypol. 
\end{proof}

The same formula for $\omega(P)$ extends to non-nodal quasi-regular rational polypols by extending the definition of the adjoint hypersurface in the appropriate manner \cite[Theorem 2.15]{kohn2025adjoints}. The constant $\alpha$ is chosen such that the iterated residues at the vertices are $\pm 1$.

\begin{example}
    Examples \ref{ex:adjpizza} and Equation 
    \eqref{eq:canformpizza} confirm Theorem \ref{thm:canformpolypol} for the positive geometry from Example \ref{ex:pizza}. 
\end{example}

We now match Theorem \ref{thm:canformpolypol} with the Hodge-theoretic Definition \ref{def:posgeomBD} inspired by \cite{brown2025positive}.

\begin{proposition}
    Let ${\cal P} = (Y_\bullet, v_\bullet)$ be a quasi-regular polypol with semi-algebraic set $P$. The relative homology class $\sigma_P = [P] \in H_2(\mathbb{P}^2, Y)$ is a positive geometry in the sense of Definition~\ref{def:posgeomBD}. Moreover, if ${\cal P}$ is nodal, then the image of $\sigma_P$ under \eqref{eq:browndupont} equals $\omega(P)$ from~\eqref{eq:omegapolypol}.
\end{proposition}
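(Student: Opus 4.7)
The plan is to follow the template of the final paragraph of the proof of Theorem \ref{thm:canformpolytope}, leveraging the residue computation already carried out inside the proof of Theorem \ref{thm:canformpolypol}. First, I would handle the claim that $\sigma_P$ is a positive geometry in the sense of Definition \ref{def:posgeomBD}. Since $P \subset \mathbb{RP}^2$ is a closed two-dimensional semi-algebraic subset with $\partial P \subseteq Y$, it determines a relative homology class $\sigma_P = [P] \in H_2(\mathbb{P}^2, Y)$. Under the rationality hypothesis on the boundary curves (implicit throughout this discussion, and needed in any case for the formula \eqref{eq:omegapolypol} to apply), Proposition \ref{prop:genuszero} implies that $(\mathbb{P}^2, Y)$ has genus zero, so $\sigma_P$ is a positive geometry in the sense of Definition \ref{def:posgeomBD}.

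For the second claim (nodal case), the strategy is to verify (i) that $\omega(P) \in \Omega^2_{\rm log}(\mathbb{P}^2 \setminus Y)$ and (ii) that ${\rm Res}_{Y_i} \omega(P)$ equals the canonical form of the boundary pair associated to $(Y_i)_{\geq 0}$ for each $i$. Given (i) and (ii), \cite[Propositions 2.14 and 2.15]{brown2025positive} characterize $\omega(\sigma_P)$ as the unique logarithmic form with the prescribed residues, forcing $\omega(P) = \omega(\sigma_P)$. Claim (ii) is essentially already established inside the proof of Theorem \ref{thm:canformpolypol}: pulling the residue along $Y_i$ back to its normalization $\mathbb{P}^1 \to Y_i$ yields the form in \eqref{eq:residues}, which by Example \ref{ex:linesegments} is exactly the canonical form of $[a_i,b_i]$ once the global rescaling fixed by the choice of $\alpha$ is applied. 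The boundary pair obtained after normalization is $(\mathbb{P}^1, \{a_i,b_i\})$, which is trivially genus zero, so the recursive hypothesis underlying the Brown-Dupont propositions is satisfied.

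The main obstacle is step (i). At smooth points of $Y$ only one factor of $f_1\cdots f_r$ vanishes, giving a simple pole. At transverse intersections of distinct $Y_i$ and $Y_j$ the local model is $h(x,y)\, x^{-1} y^{-1} \, {\rm d}x \wedge {\rm d}y$ with $h$ regular, which is manifestly logarithmic. The delicate case is at a self-node of some $Y_i$ lying in the residual arrangement $R_{\cal P}$: here $f_i$ factors locally into two smooth branches meeting transversally, and on a log resolution the form might a priori acquire an exceptional divisor with a higher-order pole. The vanishing of ${\rm adj}_{\cal P}$ on $R_{\cal P}$ mandated by Definition \ref{def:adjointpolypol} is precisely what brings the order of this pole down to one. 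This mirrors the resolution trick employed in Example \ref{ex:nodalexample} and is the technical heart of the verification.
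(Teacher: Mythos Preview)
Your outline is correct and matches the paper's argument in overall structure: invoke Proposition~\ref{prop:genuszero} for the genus-zero claim, identify the boundary canonical forms via the normalization and the residue computation \eqref{eq:residues} from the proof of Theorem~\ref{thm:canformpolypol}, and then appeal to \cite[Propositions 2.14 and 2.15]{brown2025positive} to pin down $\omega(\sigma_P)$.

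Two points of comparison. First, the paper handles your step~(i) differently: rather than a direct local check at the nodes, it simply observes that once the residues ${\rm Res}_{Y_i}\omega(P)$ are known to lie in $\Omega^1_{\rm log}(Y_i\setminus Z_i)$, \cite[Proposition 1.15]{brown2025positive} forces $\omega(P)\in\Omega^2_{\rm log}(\mathbb{P}^2\setminus Y)$. This bypasses the blow-up analysis you sketch (which is correct, and in fact shows the pullback has \emph{no} pole along the exceptional divisor over a self-node, not merely a simple one). Your hands-on route is more elementary; the paper's is shorter and reuses the residue information already established. Second, a small imprecision: the boundary pair in the Brown--Dupont sense is $(Y_i,Z_i)$ with $Z_i=Y_i\cap{\rm Sing}(Y)$, and the normalization furnishes a modification $(\mathbb{P}^1,Z_i')\to(Y_i,Z_i)$ with $Z_i'=\phi_i^{-1}(Z_i)$, which generally contains more than $\{a_i,b_i\}$. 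The paper makes this explicit and cites \cite[Section 3.3.2]{brown2025positive} for the genus-zero property of $(Y_i,Z_i)$; your shortcut to $(\mathbb{P}^1,\{a_i,b_i\})$ is harmless in effect but glosses over which pair one is actually verifying genus zero for.
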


\begin{proof}
    The fact that $\sigma_P$ is a positive geometry is a consequence of Proposition \ref{prop:genuszero}. Define $Z_i = Y_i \cap {\rm Sing}(Y)$. By \cite[Section 3.3.2]{brown2025positive}, each of the boundary pairs $(Y_i, Z_i)$ has genus zero. Hence, the relative homology class $\sigma_i = [(Y_i)_{\geq 0}] \in H_1(Y_i, Z_i)$ is a positive geometry in the sense of Definition \ref{def:posgeomBD} with canonical form $\omega_i(\sigma_i)$, where $\omega_i: H_1(Y_i, Z_i) \rightarrow \Omega^1_{\rm log}(Y_i \setminus Z_i)$ is the map from \eqref{eq:browndupont} for this one-dimensional positive geometry. To identify the form $\omega_i(\sigma_i)$, notice that under the normalization map $\phi_i: \mathbb{P}^1 \rightarrow Y_i$, the curve segment $(Y_i)_{\geq 0}$ pulls back to a line segment $[a_i,b_i] \subset \mathbb{RP}^1$. Setting $Z_i' = \phi_i^{-1}(Z_i)$, the map $\phi_i$ induces a modification $(\mathbb{P}^1 , Z_i') \rightarrow (Y_i,Z_i)$ in the sense of \cite[Definition 1.2]{brown2025positive}. By \cite[Section 2.3.1]{brown2025positive} and Example \ref{ex:linesegments}, the form $\omega_i(\sigma_i) \in \Omega^1_{\rm log}(Y_i \setminus Z_i)$ is uniquely determined by 
    \begin{equation} \label{eq:pullback} \phi_i^*(\omega_i(\sigma_i)) \, = \, \frac{b_i-a_i}{(t-a_i)(b_i-t)} \, {\rm d}t \quad \in \, \Omega^1_{\rm log}(\mathbb{P}^1 \setminus Z_i') \, \simeq \, \Omega^1_{\rm log}(Y_i \setminus Z_i) .\end{equation}
    Applying \cite[Proposition 2.14]{brown2025positive} with $X = \mathbb{P}^2$, $Y = \bigcup_{i = 1}^r Y_i$ and $Z = {\rm Sing}(Y)$, we find that $\omega(\sigma_P) \in \Omega^2_{\rm log}(\mathbb{P}^2 \setminus Y)$ is the unique logarithmic form satisfying the residue conditions ${\rm Res}_{Y_i} \omega(\sigma_P) = \omega_i( \sigma_i)$. The form $\omega(P)$ satisfies ${\rm Res}_{Y_i} \omega(P) = \omega_i( \sigma_i)$ by Equations \eqref{eq:residues} and \eqref{eq:pullback}. By \cite[Proposition 1.15]{brown2025positive}, these residue conditions imply that $\omega(P) \in \Omega^2_{\rm log}(\mathbb{P}^2 \setminus Y)$.
\end{proof}

\begin{remark} A quasi-regular polypol with semi-algebraic set $P$ is called \emph{regular} if $\bigcup_i (Y_i)_{\geq 0} \setminus \{v_{12}, \ldots, v_{r-1,r},v_{r1}\} \subset Y \setminus {\rm Sing}(Y)$, that is, the boundary of $P$ contains no singular points of $Y$ except for the vertices, and $Y \cap {\rm int}(P) = \emptyset$. A convex polygon is regular, but the polypol in Figure \ref{fig:quasireg} is not. \emph{Wachspress' conjecture} states that the adjoint curve of a regular polypol does not intersect the interior of $P$. This conjecture is wide open, see \cite[Section 3.1]{kohn2025adjoints}.
\end{remark}

\section*{Exercises}

\begin{exercise}
    Show that the canonical form of the union of two disjoint line segments in $\mathbb{RP}^1$ is the sum of the invidual canonical forms. The same is true if their interiors are disjoint.
\end{exercise}
\begin{exercise} \label{ex:quadrilateral1}
    Verify that the quadrilateral $P$ with vertices $(0,1)$, $(1,1)$, $(3,0)$, $(0,-1)$ gives a positive geometry $(\mathbb{P}^2,P)$ in the sense of Definition \ref{def:posgeomABL} with canonical form
    \[ \omega(P) \, = \, \frac{-18+x+12y}{x(y-1)(x+2y-3)(x-3y-3)} \, {\rm d} x \wedge {\rm d}y.\]
    Draw this quadrilateral and the poles and zeros of $\omega(P)$ in the plane $\mathbb{R}^2$.
\end{exercise}

\begin{exercise}
    Consider $X_{\geq 0} = \{(x,y,z) \in \mathbb{R}^3 \, : \, x \geq 0, y \geq 0, z \geq 0, x^2 + y^2 + z^2 \leq 1\}$ as a semi-algebraic subset of $X(\mathbb{R}) = \mathbb{RP}^3$. The algebraic  boundary $Y$ of $X_{\geq 0}$ is the union of three planes and a quadric in $\mathbb{P}^3$. Check that $(\mathbb{P}^3,Y)$ is a genus zero pair in the sense of \cite{brown2025positive}. Show that $(\mathbb{P}^3, X_{\geq 0})$ is a positive geometry in the sense of Definition \ref{def:posgeomABL} with canonical form 
    \[ \omega(X_{\geq 0}) \, = \, \frac{x+y+z+1}{xyz(1-x^2+y^2+z^2)} \, {\rm d}x \wedge {\rm d}y \wedge {\rm d}z. \qedhere\]
\end{exercise}

\begin{exercise} \label{ex:vertassoc}
    Compute the vertices of the three-dimensional polytope in Example \ref{ex:polytopes}. 
\end{exercise}

\begin{exercise} \label{ex:dualvolume}
    The following steps will help you match Theorems \ref{thm:canformpolytope} and \ref{thm:canformpolytopesgeneral}. 
    \begin{enumerate}
        \item For each vertex $v \in {\cal V}(P)$, define 
        \[ \sigma_v \, = \, \{ u \in (\mathbb{R}^d)^\vee \, : \, u \cdot v \leq u \cdot y \text{ for all } y \in P \}. \]
        Show that $\{ \sigma_v \, : \, v \in {\cal V}(P)\}$ partitions $\mathbb{R}^d$ into $d$-dimensional cones. More precisely, $\dim(\sigma_v) = d$, ${\rm int}(\sigma_v) \cap {\rm int}(\sigma_w) = \emptyset$ for $v,w \in {\cal V}(P)$, $v \neq w$, and $\mathbb{R}^d = \bigcup_{v \in {\cal V}(P)} \sigma_v $. (For readers familiar with polyhedral geometry, we point out that these are the maximal cones of the \emph{normal fan} of $P$.) Show that $\sigma_v$ consists of all points $u = \sum_{F \ni v} c_F \, u_F$,~$c_F \geq 0$.
        \item Assume that $P = \{ y \in \mathbb{R}^d \, : \, U \cdot y + z \geq 0 \}$ is a minimal facet representation with $z \in \mathbb{R}^n_+$, so that $0 \in {\rm int}(P)$. Show that $\sigma_v \cap P^\circ$ is the convex hull of $0 \in (\mathbb{R}^d)^\vee$ and the points $\{u_F/z_F \,:\, F \in {\cal F}(P), v \in F \}$. For this, notice that $\sigma_v \cap P^\circ = \{ u \in \sigma_v \, : \, u \cdot v \geq -1 \}$ and $u \in \sigma_v$ can be written as $u = \sum_{v \in F} c_F \, u_F$ with $c_F \geq 0$ and $u_F \cdot v = - z_F$. For our pentagon, the subdivision $P^\circ = \bigcup_{v \in {\cal V}(P)} \sigma_v \cap P^\circ$ is shown on the left side of Figure \ref{fig:dual+adjoint}.
        \item Conclude that, if $P$ is simple, then we have 
        \begin{equation} \label{eq:dualvolumefunction} {\rm vol} (P^\circ) \, = \, \sum_{v \in {\cal V}(P)} \frac{|\det U_v|}{\prod_{F \in {\cal F}(P)} z_F} \, . \end{equation}
        \item Finally, to obtain the facet description of the translated polytope $P-y$, one must replace $z \rightarrow Uy+z$. Conclude that, for $y \in {\rm int}(P)$, the function $y \mapsto {\rm vol}((P-y)^\circ)$ agrees with the canonical function from Theorem \ref{thm:canformpolytope}. \qedhere
    \end{enumerate}
\end{exercise}

\begin{exercise} \label{ex:pyramid}
    To illustrate Remark \ref{rem:subdiv}, consider the (non-simple) pyramid $P$ with vertices 
    \[ (0,0,1), \quad (1,1,0), \quad (-1,1,0), \quad (-1,-1,0), \quad (1,-1,0).\]
    Compute $\omega(P)$ by subdividing $P$ into two tetrahedra $\Delta_1, \Delta_2$ and applying Theorem \ref{thm:canformpolytope}. 
\end{exercise}

\begin{exercise}
   Let ${\cal A}_P \subset \mathbb{P}^{n-1}$ be the hypersurface defined by ${\rm Adj}_P(x) = 0$ from \eqref{eq:univadjoint} (this is the \emph{universal adjoint hypersurface}). Let $H \subset \mathbb{P}^{n-1}$ be the $d$-plane spanned by the columns of $U$ and the point $z$, interpreted as points in $\mathbb{P}^{n-1}$. Note that $H$ has coordinates $y_0, \ldots, y_d$. Show that the intersection $H \cap {\cal A}_P$ is given by $y_0 \cdot {\rm adj}_P(y) = 0$. See \cite[Figure 7]{telen2025toric} for an illustration where $P$ is a quadrilateral. For readers familiar with normal fans: Show that the adjoint ${\rm adj}_{P_{z'}}$ of a different polytope $P_{z'} = \{ y \in \mathbb{R}^d \, : \, U \cdot y + z' \}$ with the same normal fan as $P$ can be obtained analogously from the same hypersurface ${\cal A}_P$ (hence the name `universal').
\end{exercise}

\begin{exercise} \label{ex:simplecube}
    Show that the cube $P= [0,1]^3 \subset \mathbb{R}^3$ is simple, but its facet plane arrangement $Y_P$ is not. Compute the adjoint polynomial ${\rm adj}_P(y_0,y_1,y_2,y_3)$.
\end{exercise}

\begin{exercise} \label{ex:residualarrang}
    For a simple, $d$-dimensional polytope $P \subset \mathbb{R}^d$, show that $A_P \supseteq R_P \cap \mathbb{R}^d$ using Proposition \ref{prop:univadjoint}. Hint: $L_S \cap P = \emptyset$ if and only if $L_S$ contains no vertex of $P$. 
\end{exercise}

\begin{exercise} \label{ex:adjointexists}
    Show that an adjoint curve $A_{\cal P}$ of a nodal polypol ${\cal P}$ always exists. Hint: an irreducible plane curve of degree $k$ has at most $\frac{1}{2}(k-1)(k-2)$ singularities. 
\end{exercise}

\begin{exercise}
    We consider the quasi-regular polypol defined by the following data. The boundary curves are the four lines $Y_1, Y_2, Y_3$ and $Y_4$ given locally by $y = 0, 2y = -x+2, y = -2x+2$ and $x=0$ respectively. The vertices are as shown in Figure \ref{fig:nonconvexquad}. The semi-algebraic set $P$ is shaded in green. Compute the canonical form $\omega(P)$ using Theorem \ref{thm:canformpolypol}. 
    \begin{figure}
        \centering
        \includegraphics[width=0.32\linewidth]{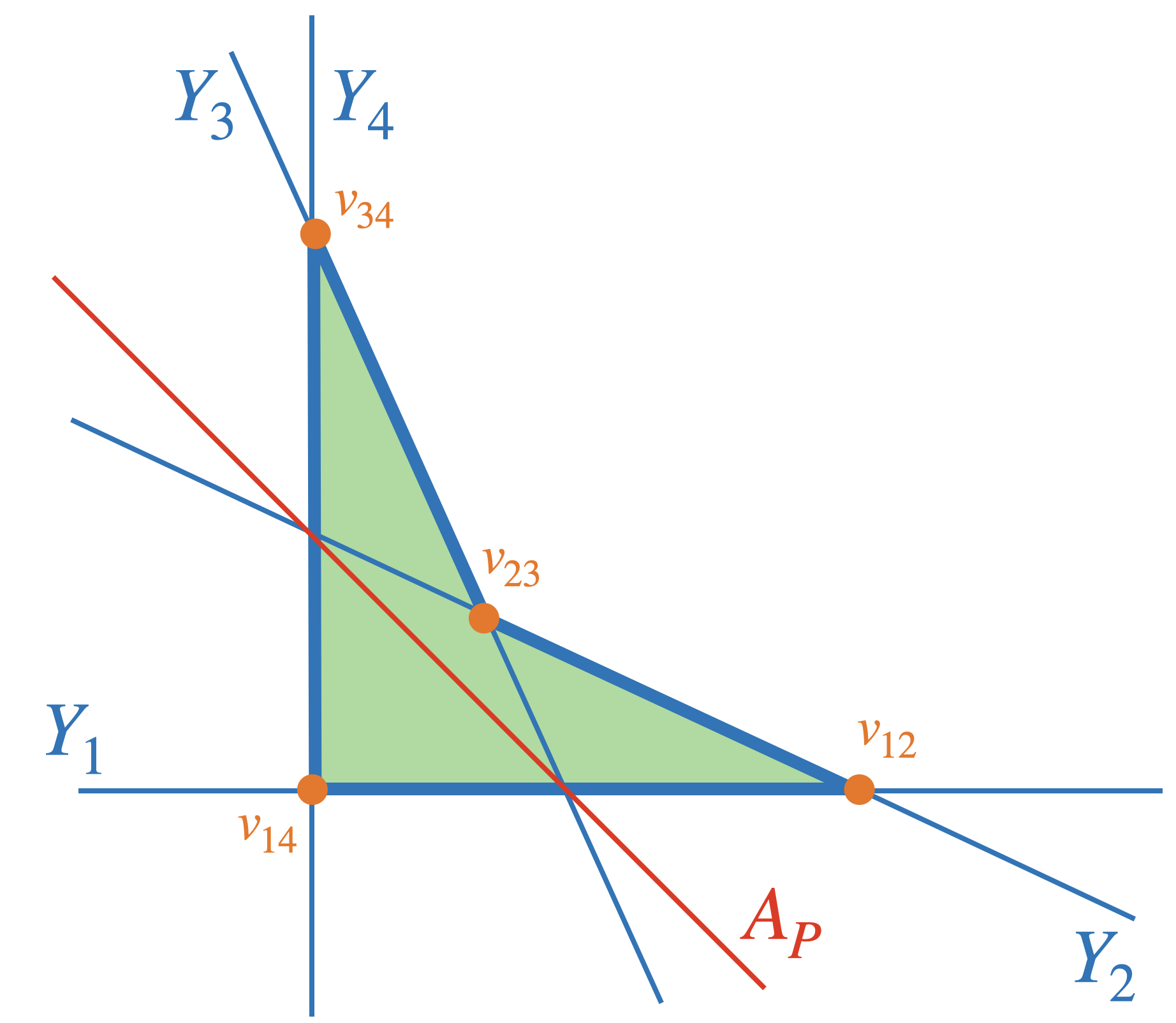}
        \caption{A non-convex quadrilateral is a quasi-regular polypol.}
        \label{fig:nonconvexquad}
    \end{figure}
\end{exercise}

\section*{Acknowledgements}
I am grateful to Jonathan Boretsky, Matteo Parisi and Lauren Williams for organizing the summer school and for including these lectures in the program. I want to thank Cl\'ement Dupont and Rainer Sinn for useful comments on a previous version of this manuscript.

\footnotesize
\bibliographystyle{abbrv}
\bibliography{references}

\begin{thebibliography}{10}

\bibitem{arkani2018scattering}
N.~Arkani-Hamed, Y.~Bai, S.~He, and G.~Yan.
\newblock Scattering forms and the positive geometry of kinematics, color and
  the worldsheet.
\newblock {\em Journal of High Energy Physics}, 2018(5):1--78, 2018.

\bibitem{arkani2017positive}
N.~Arkani-Hamed, Y.~Bai, and T.~Lam.
\newblock Positive geometries and canonical forms.
\newblock {\em Journal of High Energy Physics}, 2017(11):1--124, 2017.

\bibitem{arkani2017cosmological}
N.~Arkani-Hamed, P.~Benincasa, and A.~Postnikov.
\newblock Cosmological polytopes and the wavefunction of the universe.
\newblock {\em arXiv:1709.02813}, 2017.

\bibitem{arkani2024cosmohedra}
N.~Arkani-Hamed, C.~Figueiredo, and F.~Vaz{\~a}o.
\newblock Cosmohedra.
\newblock {\em arXiv:2412.19881}, 2024.

\bibitem{arkani2023all}
N.~Arkani-Hamed, H.~Frost, G.~Salvatori, P.-G. Plamondon, and H.~Thomas.
\newblock All loop scattering as a counting problem.
\newblock {\em arXiv:2309.15913}, 2023.

\bibitem{arkani2023all2}
N.~Arkani-Hamed, H.~Frost, G.~Salvatori, P.-G. Plamondon, and H.~Thomas.
\newblock All loop scattering for all multiplicity.
\newblock {\em arXiv:2311.09284}, 2023.

\bibitem{arkani2014amplituhedron}
N.~Arkani-Hamed and J.~Trnka.
\newblock The amplituhedron.
\newblock {\em Journal of High Energy Physics}, 2014(10):1--33, 2014.

\bibitem{brauner2024wondertopes}
S.~Brauner, C.~Eur, E.~Pratt, and R.~Vlad.
\newblock Wondertopes.
\newblock {\em arXiv:2403.04610}, 2024.

\bibitem{brown2025positive}
F.~Brown and C.~Dupont.
\newblock Positive geometries and canonical forms via mixed {H}odge theory.
\newblock {\em arXiv:2501.03202}, 2025.

\bibitem{de2024amplituhedron}
S.~De, D.~Pavlov, M.~Spradlin, and A.~Volovich.
\newblock From {F}eynman diagrams to the amplituhedron: a gentle review.
\newblock {\em Le Matematiche}, 80(1):233--254, 2025.

\bibitem{Early2023Positive}
N.~Early, A.~Geiger, M.~Panizzut, B.~Sturmfels, and C.~H. Yun.
\newblock Positive del {P}ezzo geometry.
\newblock {\em Annales de l'Institut Henri Poincaré D}, To appear:To appear,
  2023.

\bibitem{eden2017correlahedron}
B.~Eden, P.~Heslop, and L.~Mason.
\newblock The correlahedron.
\newblock {\em Journal of High Energy Physics}, 2017(9):1--42, 2017.

\bibitem{gaetz2025canonical}
C.~Gaetz.
\newblock Canonical forms of polytopes from adjoints.
\newblock {\em arXiv:2504.07272}, 2025.

\bibitem{hartshorne2013algebraic}
R.~Hartshorne.
\newblock {\em Algebraic geometry}, volume~52.
\newblock Springer Science \& Business Media, 2013.

\bibitem{kohn2025adjoints}
K.~Kohn, R.~Piene, K.~Ranestad, F.~Rydell, B.~Shapiro, R.~Sinn, M.-{\c{S}}.
  Sorea, and S.~Telen.
\newblock Adjoints and canonical forms of polypols.
\newblock {\em Documenta Mathematica}, 30(2):275--346, 2025.

\bibitem{kohn2020projective}
K.~Kohn and K.~Ranestad.
\newblock Projective geometry of {W}achspress coordinates.
\newblock {\em Foundations of Computational Mathematics}, 20(5):1135--1173,
  2020.

\bibitem{lam2024invitation}
T.~Lam.
\newblock An invitation to positive geometries.
\newblock {\em Open Problems in Algebraic Combinatorics}, 110:159--180, 2024.

\bibitem{lam2024moduli}
T.~Lam.
\newblock Moduli spaces in positive geometry.
\newblock {\em Le Matematiche}, 80(1):17--101, 2025.

\bibitem{pavlov2025positive}
D.~Pavlov and K.~Ranestad.
\newblock Positive polytopes with few facets in the {G}rassmannian.
\newblock {\em arXiv:2503.01652}, 2025.

\bibitem{ranestad2024adjoints}
K.~Ranestad, R.~Sinn, and S.~Telen.
\newblock Adjoints and canonical forms of tree amplituhedra.
\newblock {\em Mathematica Scandinavica}, 130(2):433--466, 2024.

\bibitem{ranestad2025positive}
K.~Ranestad, B.~Sturmfels, and S.~Telen.
\newblock Logarithmic discriminants of hyperplane arrangements.
\newblock {\em Le Matematiche}, 80(1):3--16, 2025.

\bibitem{telen2025toric}
S.~Telen.
\newblock Toric amplitudes and universal adjoints.
\newblock {\em arXiv:2504.00897}, 2025.

\bibitem{wachspress1975rational}
E.~L. Wachspress.
\newblock {\em A Rational Finite Element Basis}.
\newblock Academic Press, New York, 1975.

\bibitem{warren1996barycentric}
J.~Warren.
\newblock Barycentric coordinates for convex polytopes.
\newblock {\em Advances in Computational Mathematics}, 6(1):97--108, 1996.

\end{thebibliography}

\noindent{\bf Author's address:}
\medskip
\noindent Simon Telen, MPI-MiS Leipzig
\hfill {\tt simon.telen@mis.mpg.de}

\end{document}